\newcommand{\orcid}[1]{\,\href{https://orcid.org/#1}{\includegraphics[width=8pt]{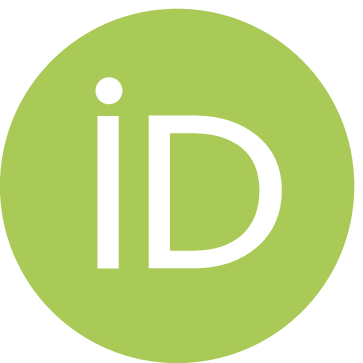}}}
\newcommand{\C}{\mathcal{C}}
\renewcommand{\S}{\mathcal{S}}
\newtheorem{theorem}{Theorem}
\newtheorem{lemma}{Lemma}
\newtheorem{proposition}{Proposition}
\title{Caterpillars and alternating paths}
\author{Rain Jiang\orcid{0000-0002-0144-942X}\qquad
Kai Jiang\orcid{0000-0001-8165-0571}\qquad
Minghui Jiang\orcid{0000-0003-1843-9292}\,\thanks{\texttt{ dr.minghui.jiang at gmail.com}}\medskip\\
Home School, USA}
\date{}
\begin{document}

\maketitle

\begin{abstract}
	Let $p(m)$ (respectively, $q(m)$) be the maximum number $k$ such that any tree with $m$ edges
	can be transformed by contracting edges (respectively, by removing vertices)
	into a caterpillar with $k$ edges.
	We derive closed-form expressions for $p(m)$ and $q(m)$ for all $m \ge 1$.
	The two functions $p(n)$ and $q(n)$ can also be interpreted in terms of
	alternating paths among $n$ disjoint line segments in the plane,
	whose $2n$ endpoints are in convex position.
\end{abstract}

\section{Introduction}

For any family $\S$ of disjoint line segments in the plane,
an \emph{alternating path among $\S$}
is a simple polygonal chain
in which every other segment is in $\S$.
An alternating path among $\S$ is
\emph{compatible with $\S$}
if the chain does not cross any segment in $\S$ that is not in the chain.

For $n \ge 1$,
let $\hat p(n)$ (respectively, $\hat q(n)$)
be the maximum number $k$
such that any family $\S$ of $n$ disjoint segments in the plane
admits an alternating path among $\S$ (respectively, compatible with $\S$)
going through $k$ segments in $\S$.

Urrutia~\cite{Ur02} showed that
$\hat p(n) = O(n^{1/2})$
and
$\hat q(n) = O(\log n)$,
and conjectured that
$\hat p(n) = \Theta(n^{1/2})$
and
$\hat q(n) = \Theta(\log n)$.
Subsequently,
Hoffmann and T\'oth~\cite{HT03} proved that,
indeed, $\hat q(n) = \Omega(\log n)$,
and
Pach and Pinchasi~\cite{PP05} proved that
$\hat p(n) = \Omega(n^{1/3})$.
The follow-up work~\cite{HT03,PP05}
also refined Urrutia's initial upper bounds.

\begin{figure}[htbp]
	\centering{\hspace{\stretch1}\includegraphics{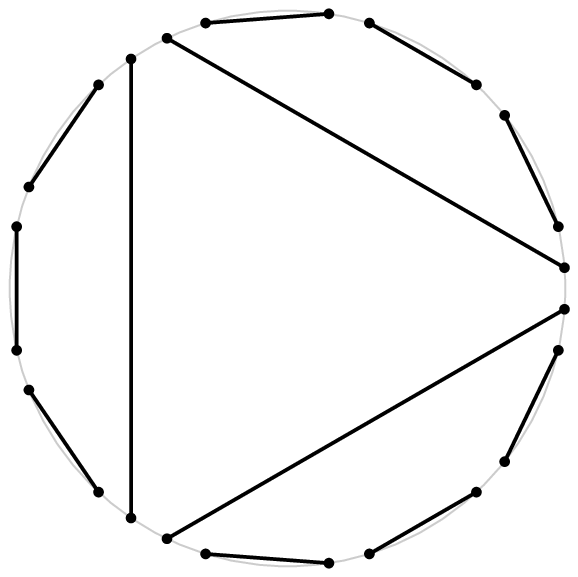}\hspace{\stretch1}\includegraphics{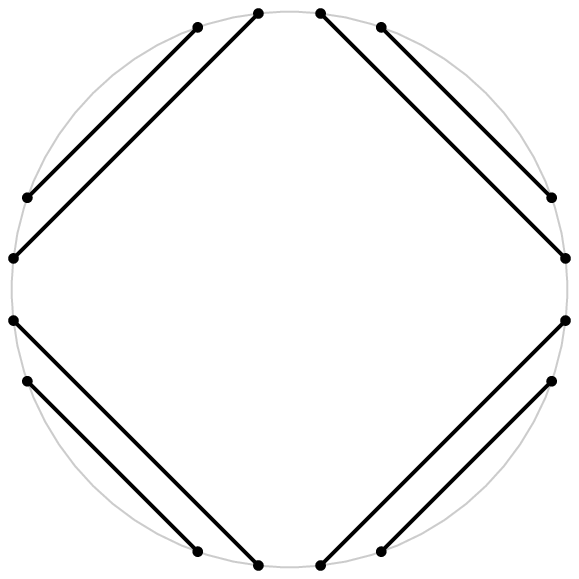}\hspace{\stretch1}}
	\caption{Left:
	a family of $3(3^k-1)/2$ segments
	in the recursive construction
	of Hoffmann and T\'oth~\cite{HT03} for $k = 2$.
	Right: a family of $2k^2$ segments, consisting of $2k$ groups of $k$ segments
	each, in the construction of Pach and Pinchasi~\cite{PP05} for $k = 2$.}\label{fig:upper}
\end{figure}

Refer to Figure~\ref{fig:upper}.
Hoffmann and T\'oth~\cite{HT03}
refined the upper bound of $\hat q(n) = O(\log n)$ to
$\hat q(n) \le 6\log_3 n + O(1)$ for all $n$,
by constructing for every $k \ge 1$ a family of $3(3^k - 1)/2$ segments
such that any alternating path compatible with these segments
can go through at most $6k-3$ segments.
Pach and Pinchasi~\cite{PP05}
refined the upper bound of $\hat p(n) = O(n^{1/2})$ to
$\hat p(n) \le \sqrt{8n} - 2$ for $n = 2k^2$,
by constructing for every $k \ge 1$ a family of $2k^2$ segments
such that any alternating path among these segments
can go through at most $4k-2$ segments;
they remarked that ``it seems likely that
the order of magnitude of this bound is not far from optimal.''

In both constructions~\cite{HT03,PP05}, as illustrated in Figure~\ref{fig:upper},
the endpoints of the segments are in convex position.
It is natural to ask whether these upper bounds~\cite{HT03,PP05}
are close to optimal at least in this restricted setting.

For $n \ge 1$,
let $p(n)$ (respectively, $q(n)$)
be the maximum number $k$
such that any family $\S$ of $n$ disjoint segments in the plane,
whose $2n$ endpoints are in convex position,
admits an alternating path among $\S$ (respectively, compatible with $\S$)
going through $k$ segments in $\S$.

We clearly have $\hat p(n) \le p(n)$ and $\hat q(n) \le q(n)$ for all $n \ge 1$.
Effectively,
Pach and Pinchasi~\cite{PP05}
proved that
$p(n) \le \sqrt{8n} - 2$ for $n = 2k^2$,
and
Hoffmann and T\'oth~\cite{HT03}
proved that
$q(n) \le 6\log_3 n + O(1)$ for all $n \ge 1$.

Our two theorems in the following imply that
the upper bound on $p(n)$ by
Pach and Pinchasi~\cite{PP05}
is indeed optimal for $n = 2k^2$,
and that the upper bound on $q(n)$ by
Hoffmann and T\'oth~\cite{HT03}
is best possible apart from an additive constant for all $n \ge 1$:

\begin{theorem}\label{thm:conv-sqrt}
	For $n \ge 1$,
	$p(n) = \lceil \sqrt{8n} - 2 \rceil$.
\end{theorem}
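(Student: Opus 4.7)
The plan is to prove Theorem~\ref{thm:conv-sqrt} via the caterpillar-minor interpretation of $p(n)$ previewed in the abstract. A family $\S$ of $n$ disjoint segments with endpoints in convex position forms a non-crossing perfect matching on the $2n$ convex hull points; its $n$ chords subdivide the hull interior into $n+1$ faces, and the dual graph $T(\S)$, with a vertex per face and an edge per chord, is a tree with $n$ edges. The first task is to show that the maximum number of $\S$-segments on an alternating path equals the maximum number of edges of a caterpillar obtainable from $T(\S)$ by edge contractions. The easy direction realizes each caterpillar minor as an alternating path: traverse its $\S$-chords in spine order, routing each connector through the merged face that corresponds to the intervening spine bag, so that the connector crosses only contracted chords and the overall chain is simple and alternating. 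The reverse direction reads, from an alternating path, the sequence of $\S$-chords as a non-backtracking walk on the edges of $T(\S)$ and uses the face traversals of the connectors to extract a caterpillar minor of the same size.

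With the equivalence in hand, Theorem~\ref{thm:conv-sqrt} becomes the combinatorial statement $\min_T c(T) = \lceil \sqrt{8n}-2 \rceil$, where $T$ ranges over trees with $n$ edges and $c(T)$ denotes the largest caterpillar minor of $T$. For the upper bound, the extremal trees are spiders $S_{r,\ell}$ (a center with $r$ arms of length $\ell$); a direct analysis of caterpillar minors shows $c(S_{r,\ell}) = 2\ell + r - 2$ for $r \ge 2$, since the spine can follow at most two of the arms while each of the other $r-2$ arms contributes at most one leg. Minimizing $2\ell + r - 2$ subject to $r\ell = n$ by AM--GM yields $2\sqrt{2n} - 2$, matching the Pach--Pinchasi bound at $n = 2k^2$ via $(r,\ell) = (2k, k)$; a spider with one arm of slightly different length handles other values of $n$ and the ceiling. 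For the lower bound, note that for any path $P$ in $T$, keeping each vertex of $P$ as its own spine bag and contracting each attached branch into a single leaf yields a caterpillar minor with $|P| + d(P)$ edges, where $d(P)$ counts edges of $T$ incident to $P$ but not in $P$; hence $c(T) \ge \max_P (|P| + d(P))$. The task is then to show that this maximum is at least $\lceil \sqrt{8n}-2 \rceil$. Choosing $P^\star$ to be a longest path of length $\ell$, the maximality of $P^\star$ caps each branch's depth by the smaller distance from its attachment point to an endpoint of $P^\star$, and a counting argument splitting $n = \ell + \sum (\text{branch sizes})$ with an AM--GM balance between spine length and branch contribution delivers $n \le (c(T)+2)^2/8$.

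The chief obstacle is tightness of the constant $8$ in the lower bound: a plain longest-path argument leaves room for branches to fatten up to their depth bound, creating slack. To close this I expect to either enrich the path choice by considering the path that actually maximizes $|P| + d(P)$ (which may dip into a deep branch rather than simply be longest), or iterate the argument into a maximal subtree, so as to force equality in AM--GM to occur only on a spider-like configuration. Handling the ceiling and the $n$ not of the form $2k^2$ should reduce to routine perturbations of the spider construction.
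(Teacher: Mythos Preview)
Your reduction to caterpillar minors and your upper bound via spiders both match the paper. The gap is in the lower bound. The quantity $\max_P(|P|+d(P))$ is simply not large enough: take a path of length $\ell$ with a complete binary tree of depth about $\ell/2$ hung off its midpoint. This tree has $n=\Theta(2^{\ell/2})$ edges, so $\sqrt{8n}-2=\Theta(2^{\ell/4})$, yet for every path $P$ the sum $|P|+d(P)$ is $O(\ell)$, since any path visits only $O(\ell)$ vertices and each contributes $O(1)$ side branches. Your proposed remedy of optimizing over $P$ therefore cannot close the gap, and the ``iterate into a subtree'' suggestion is too vague to evaluate.

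The missing insight is that the largest caterpillar minor of $T$ has an exact formula: it equals (number of leaves of $T$) $+$ (diameter of $T$) $-\,2$. For the construction, keep every leaf edge of $T$ together with every edge of a longest path and contract the rest; this already beats your construction, which retained only one leaf per branch off the spine. For the matching upper bound, observe that in a \emph{maximum} caterpillar minor every leaf must already be a leaf of $T$ (otherwise un-contract one edge inside that leaf's bag to gain an edge while staying a caterpillar), and its spine lifts to a path in $T$ between internal vertices, hence of length at most $\mathrm{diam}(T)-2$. With this formula in hand the extremal problem becomes: maximize the number of edges of a tree with $l$ leaves and diameter $d$; the optimum is a spider with about $dl/2$ edges, and maximizing $dl/2$ subject to $l+d-2=k$ gives roughly $(k+2)^2/8$, which inverts to $p(n)=\lceil\sqrt{8n}-2\rceil$.
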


\begin{theorem}\label{thm:conv-log}
	For $1 \le n \le 170$,
	\begin{center}
		\begin{tabular}{c|c c c c c c c c c}
			\hline
			$n$ & $[1,4]$ & $[5,6]$ & $[7,8]$ & $[9,10]$ & $[11,12]$ & $[13,15]$ & $[16,20]$ & $[21,25]$ & $[26,30]$\\
			$q(n)$ & $n$ & $5$ & $6$ & $7$ & $8$ & $9$ & $10$ & $11$ & $12$\\
			\hline
			$n$ & $[31,35]$ & $[36,44]$ & $[45,55]$ & $[56,66]$ & $[67,80]$ & $[81,96]$ & $[97,115]$ & $[116,138]$ & $[139,170]$\\
			$q(n)$ & $13$ & $14$ & $15$ & $16$ & $17$ & $18$ & $19$ & $20$ & $21$\\
			\hline
		\end{tabular}
	\end{center}
	For $n \ge 171$,
	$q(n) = \max\{\, q_r(n) \mid 0 \le r \le 5 \,\}$,
	where
	\begin{align*}
		q_0(n) &= 6\left\lfloor\frac16\left\lceil
		6\log_3\left( \frac{2n}{55} + \frac1{11} \right) + 11
		\right\rceil\right\rfloor\\
		q_1(n) &= 6\left\lfloor\frac16\left\lceil
		6\log_3\left( \frac{n}{33} + \frac1{11} \right) + 11
		\right\rceil\right\rfloor + 1\\
		q_2(n) &= 6\left\lfloor\frac16\left\lceil
		6\log_3\left( \frac{2n}{235} + \frac1{47} \right) + 17
		\right\rceil\right\rfloor + 2\\
		q_3(n) &= 6\left\lfloor\frac16\left\lceil
		6\log_3\left( \frac{n}{141} + \frac1{47} \right) + 17
		\right\rceil\right\rfloor + 3\\
		q_4(n) &= 6\left\lfloor\frac16\left\lceil
		6\log_3\left( \frac{2n}{115} + \frac1{23} \right) + 11
		\right\rceil\right\rfloor + 4\\
		q_5(n) &= 6\left\lfloor\frac16\left\lceil
		6\log_3\left( \frac{n}{69} + \frac1{23} \right) + 11
		\right\rceil\right\rfloor + 5.
	\end{align*}
\end{theorem}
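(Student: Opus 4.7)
The plan is to reduce the geometric statement to the combinatorial version of $q$ from the abstract. Any family $\S$ of $n$ disjoint segments with endpoints in convex position partitions the convex hull into $n+1$ regions, and the region-adjacency graph is a tree $T_\S$ whose $n$ edges are in bijection with $\S$. I would verify that, under this bijection, compatible alternating paths through $k$ segments of $\S$ correspond exactly to caterpillars with $k$ edges obtained from $T_\S$ by vertex deletions (and conversely every such caterpillar is realized by some compatible alternating path, by entering and leaving the spine vertices in the right cyclic order around each region). This reduces everything to computing $q(m)$ for trees.

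For the tree problem I would build extremal trees recursively. The Hoffmann--T\'oth construction~\cite{HT03} is already a recursive template: from a ``seed'' tree $S$ with $q(S) = q_0$ edges, one glues three copies of $S$ to a small connector to produce a tree of size roughly $3|S|$ with $q$ value $q_0 + 6$. Iterating gives the asymptotic $q(n) \sim 6 \log_3 n$. To get the exact value of $q(n)$, one must choose the optimal seed, and since the recursion steps $q$ by $6$ per level, the optimal seed depends on $q \bmod 6$. I would therefore identify an optimal seed for each residue $r \in \{0,1,\ldots,5\}$: six seeds, of explicit sizes that explain the constants $11$, $23$, $47$ paired with the factors $3$ and $5$ in the statement. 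Inverting the resulting recursion level-by-level, and tracking the integer rounding needed to make $q$ attainable, produces exactly the six formulas $q_r(n)$.

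The matching upper bound on $q(m)$ for trees would be proved by induction on $m$: in any tree with $m$ edges, locate an edge whose removal splits off a subtree with at least some fixed fraction of the edges; apply the induction hypothesis to that subtree to get a large caterpillar, and extend it across the chosen edge to gain six more spine and leg edges. The base cases of this induction are precisely the small trees enumerated in the table for $1 \le n \le 170$; verifying each table entry requires direct argument or a finite search, since below this threshold the recursion has not yet stabilized into the $6\log_3 n$ regime.

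The main obstacle I expect is the bookkeeping for the six residue families. The asymptotic bound $q(n) \sim 6 \log_3 n$ is immediate once the recursive template is in hand, but pinning down the exact additive and multiplicative constants requires (i) exhibiting six distinct optimal seeds and (ii) proving matching upper bounds that are tight for each seed. Near the boundaries between residue classes, one must additionally check that no competing $q_{r'}(n)$ with $r' \ne r$ overtakes $q_r(n)$, which is why $q(n)$ is written as the maximum of the six candidates. The nested floor--ceiling--log expressions in each $q_r$ are calibrated precisely to express the joint integrality and optimality constraints, and getting these constants right is the most delicate part of the proof.
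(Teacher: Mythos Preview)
Your reduction from compatible alternating paths to induced caterpillars in the dual tree is correct and matches the paper's Proposition~\ref{prp:eq}. Your intuition that the extremal constructions are built by a ``triple at each level'' recursion, producing six residue families modulo $6$, is also correct in spirit and explains the form of the $q_r$ formulas.

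The genuine gap is in your upper bound argument. You propose to induct on $m$: split off a subtree carrying a fixed fraction of the edges, find a caterpillar in it by induction, then ``extend it across the chosen edge to gain six more spine and leg edges.'' But there is no reason the extension gains exactly six edges, or even any controlled number: the caterpillar produced by the induction hypothesis need not reach the splitting edge, and if it does, the number of new leg edges picked up at the next spine vertex is just that vertex's degree, over which you have no a priori control. Without a mechanism tying the gain to the branching structure, this induction does not close.

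The paper's upper bound works differently and introduces the key technical tool you are missing. Rather than inducting on $m$, it fixes $k$ and bounds the size $e(k)$ of any tree whose largest induced caterpillar has exactly $k$ edges. The crucial notion is a \emph{very hungry caterpillar} in a rooted tree: one that contains every edge incident to some root-to-leaf path. Letting $f(k)$ be the maximum size of a rooted tree (with root of degree $1$) whose largest very hungry caterpillar has size $k$, the paper shows one may restrict to symmetric trees with all internal degrees at most $3$, yielding the recurrence $f(k)=3f(k-3)+1$ for $k\ge 9$ and hence closed forms in three residue classes modulo $3$. Then, for an arbitrary tree $T$ with maximum caterpillar size $k$, one looks at a vertex $v$ on the spine of a maximum caterpillar where the degrees in $C$ and in $T$ agree; the branches of $T$ at $v$ are bounded in size by values of $f$, giving $|T|\le f(y)+(r-1)f(x)$ with $x+y+r-2=k$. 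Optimizing over $r,x,y$ shows the maximum is $5f((k-3)/2)$ or $6f((k-4)/2)$ depending on parity, which combined with the three residue classes of $f$ produces the six cases and the constants $11,23,47$ paired with $5$ and $6$ (equivalently $5/2$ and $3$) that you noticed. Inverting $e(k-1)<m$ in each class then yields the six $q_r$ formulas. Your proposal would need to replace the vague inductive step with this branch-by-branch size accounting.
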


Recall that ``a caterpillar is a tree which metamorphoses into a path when its cocoon
of endpoints is removed''~\cite{HS73}.
In other words, a \emph{caterpillar} is a tree whose non-leaf vertices induce a path.
Our proofs of Theorem~\ref{thm:conv-sqrt} and Theorem~\ref{thm:conv-log}
use the following equivalent characterizations of $p(n)$ and $q(n)$
in terms of caterpillars and trees.
From this perspective,
the closed-form expressions for $p(n)$ and $q(n)$ that we obtained may be of interest
beyond the topic of alternating paths through disjoint segments.

\begin{proposition}\label{prp:eq}
	For $m \ge 1$,
	$p(m)$ (respectively, $q(m)$)
	is the maximum number $k$
	such that any tree with $m$ edges
	can be transformed into a caterpillar with $k$ edges
	by contracting edges
	(respectively, by removing vertices).
\end{proposition}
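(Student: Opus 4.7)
My plan is to set up a duality between segment arrangements and trees and then translate each of the two alternating-path conditions into one of the two caterpillar operations.

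First I would establish the duality: place one vertex of a dual tree $T(\S)$ inside each of the $n+1$ faces of the chord arrangement of $\S$, and for every chord put an edge of $T(\S)$ between the two faces it separates. Then $T(\S)$ is a plane tree with $n$ edges, and every plane tree with $n$ edges arises in this way, so the minimum over convex-position families of $n$ segments agrees with the minimum over abstract trees with $n$ edges.

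For the compatible case ($q$), a compatible alternating path $c_0, s_1, c_1, \ldots, s_k, c_k$ has each connector $c_i$ confined to a single face, so consecutive chord-edges $s_i, s_{i+1}$ share a vertex of $T(\S)$. Hence $s_1, \ldots, s_k$ traces a Hamiltonian path in the line graph of the spanning subtree $C \subseteq T(\S)$ with edge set $\{s_1, \ldots, s_k\}$, and by the classical fact that the line graph of a tree is traceable iff the tree is a caterpillar, $C$ is a caterpillar subtree with $k$ edges. Since deleting the vertices of $T(\S) \setminus C$ leaves exactly $C$, this matches the statement for $q$. For the converse I would start from a caterpillar subtree $C$, list its edges along the spine while inserting each leg when its spine-vertex is reached, and then draw the connectors face by face, choosing chord endpoints that are adjacent on the face boundary; induction on $k$ maintains simplicity of the chain. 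For the general case ($p$), the connector $c_i$ is additionally allowed to cross non-path chords. Contracting every edge of $T(\S)$ not in $\{s_1, \ldots, s_k\}$ collapses the faces visited by each connector to a single vertex, turning the alternating path into a compatible one in the contracted arrangement; by the $q$-argument the contracted tree is a caterpillar with $k$ edges. Conversely, given a subset of edges whose contraction of the complement is a caterpillar, I would first realize a compatible alternating path in the contracted arrangement and then re-inflate the contracted edges so that each previously trivial connector becomes a straight segment crossing exactly the contracted chords in their planar order.

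The main obstacle will be the converse direction in both cases: realizing an abstract caterpillar as an actual simple polygonal chain. The combinatorial sequencing is immediate, but simplicity of the chain imposes geometric constraints on how two connectors can share a face and on which of the two endpoints of each chord is used. I plan to handle this by induction on $k$, peeling one leaf of the caterpillar at a time and attaching each new connector in the outermost part of its face—closest to the convex boundary—so as not to intersect any previously placed connector.
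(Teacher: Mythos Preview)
Your plan is correct and follows the same route as the paper: set up the duality $\S \leftrightarrow T(\S)$, prove the $q$-equivalence (compatible paths $\leftrightarrow$ caterpillar subtrees), and reduce the $p$-equivalence to it by deleting the non-path chords. The paper handles your ``main obstacle'' more simply than you plan to. For the $q$-converse no induction or endpoint bookkeeping is needed: each cell of $C(\S)$ is convex, so the chords on its boundary can be threaded into a simple path inside that cell with any prescribed first and last endpoints, and since the non-leaf vertices of a caterpillar lie on a simple path, the connectors sit in pairwise distinct convex cells and cannot meet. For the $p$-converse no geometric re-inflation is needed either: an alternating path compatible with the subfamily $\S'$ is already, as drawn, an alternating path among $\S$, because every segment of $\S'$ is in the chain and crossings with $\S\setminus\S'$ are permitted by definition.
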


Throughout the paper, the \emph{size} of a graph refers to the number of edges in it.

\section{Connection between alternating paths and caterpillars in trees}

In this section we prove Proposition~\ref{prp:eq}
by showing that the two functions $p(n)$ and $q(n)$ for $n \ge 1$
can be interpreted in terms of caterpillars and trees.

\begin{figure}[htbp]
	\begin{center}
		\hspace*{\stretch1}\includegraphics{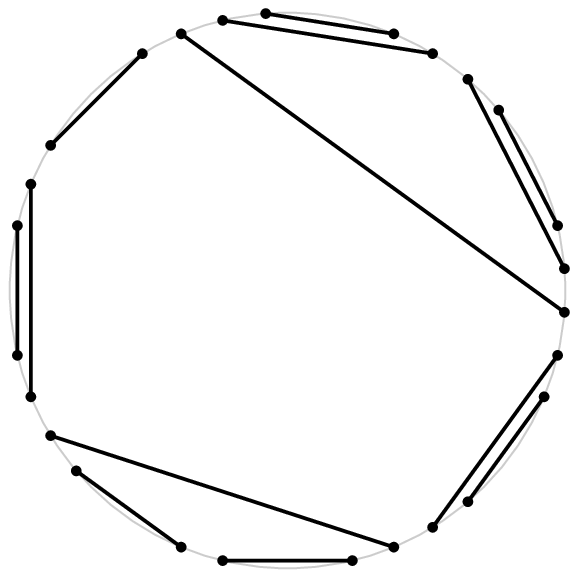}\hspace*{\stretch1}\includegraphics{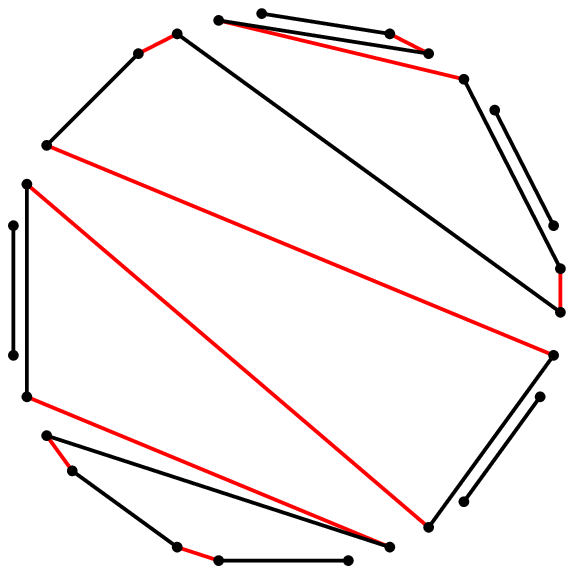}\hspace*{\stretch1}
	\end{center}
	\begin{center}
		\hspace*{\stretch1}\includegraphics{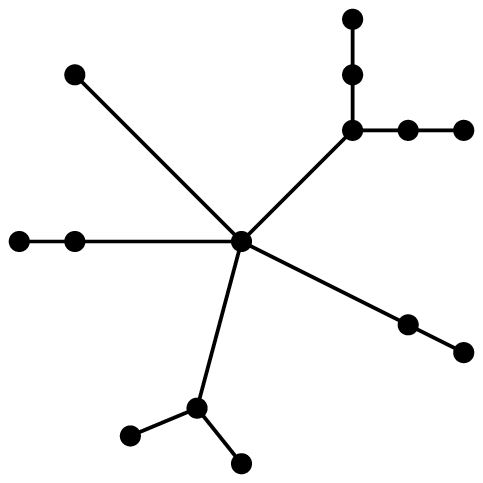}\hspace*{\stretch1}\includegraphics{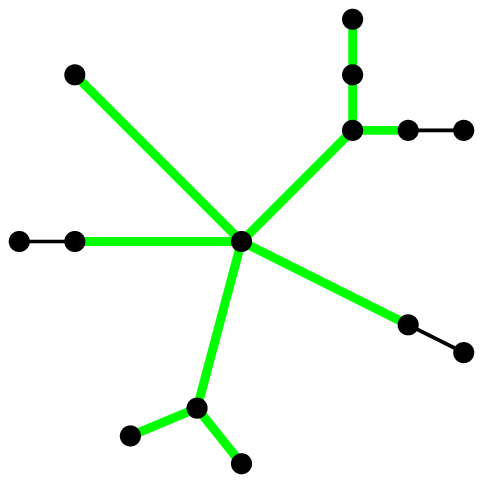}\hspace*{\stretch1}
	\end{center}
	\caption{Top left: a family $\S$ of $13$ disjoint segments as chords of a convex body $C(\S)$.
	Top right: an alternating path compatible with $\S$ going through $10$ segments.
	Bottom left: a tree $T(\S)$ with $13$ edges corresponding to $\S$.
	Bottom right: a caterpillar with $10$ edges in $T(\S)$.}\label{fig:eq}
\end{figure}

Refer to Figure~\ref{fig:eq}.
Let $\S$ be a family of $n$ disjoint segments in the plane,
whose $2n$ endpoints are in convex position.
Then there exists a convex body $C(\S)$ with the $2n$ endpoints on its boundary,
and with the $n$ segments as chords,
which partition $C(\S)$ into $n+1$ cells.
The intersection graph
with $n+1$ vertices for the $n+1$ cells,
and with two vertices connected by an edge if and only if
the corresponding cells have one of the $n$ segments as their shared boundary,
is a tree, which we denote by $T(\S)$.

Conversely,
for any tree $T$ with $n$ edges,
it is easy to construct a family $\S$ of $n$ disjoint segments
as chords of a convex body $C$
such that the intersection graph of the resulting $n+1$ cells of $C$
is $T$.

The following lemma shows that
alternating paths compatible with $\S$
are equivalent to
caterpillars obtained from $T(\S)$ by removing vertices:

\begin{lemma}\label{lem:eq1}
	$\S$ admits an alternating path compatible with $\S$ going through $k$ segments
	if and only if
	$T(\S)$ contains
	a caterpillar with $k$ edges
	as a subgraph.
\end{lemma}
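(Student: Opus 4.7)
My plan is to prove both directions by tracking how the polygonal chain passes through the cells of $C(\S)$ and relating this to edges of $T(\S)$.

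For the forward direction, suppose an alternating path compatible with $\S$ traverses segments $s_1,\dots,s_k\in\S$ in this order, with non-$\S$ connecting segment $c_i$ joining $s_i$ to $s_{i+1}$. Because $c_i$ crosses no segment of $\S$ and has its endpoints on $s_i$ and $s_{i+1}$, it lies in a single cell $\alpha_i$ of $C(\S)$ that is adjacent to both $s_i$ and $s_{i+1}$; thus the edges $s_i,s_{i+1}$ of $T(\S)$ share the endpoint $\alpha_i$. Consequently the subgraph $T'\subseteq T(\S)$ with edge set $\{s_1,\dots,s_k\}$ is connected, hence a subtree. I would then identify the spine by deleting consecutive duplicates from $(\alpha_1,\dots,\alpha_{k-1})$: since the edges $s_i$ are pairwise distinct, the resulting walk uses each edge of $T(\S)$ at most once, and therefore (by the tree property) is a simple path $v_1,\dots,v_r$ in $T(\S)$. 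Each edge $s_i$ with $\alpha_{i-1}=\alpha_i$, as well as $s_1$ and $s_k$ when the path begins or ends with an $\S$-edge, attaches to the spine at one endpoint in $T(\S)$, and its other endpoint $\gamma_i$ is a candidate leaf of $T'$.

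The main obstacle is to verify that the $\gamma_i$ are actually leaves of $T'$: namely, that they are pairwise distinct and distinct from every spine vertex. Suppose to the contrary that $\gamma_i=\gamma_j$ for some $i\neq j$. Then $s_i$ and $s_j$ would be two distinct edges of $T(\S)$ incident to $\gamma_i$, giving a length-two path in $T(\S)$ from $\alpha_i$ to $\alpha_j$ through $\gamma_i$; since $T(\S)$ is a tree, this must coincide with the unique $\alpha_i$-to-$\alpha_j$ path, forcing $\gamma_i$ onto the spine and forcing $s_i,s_j$ to be spine transition edges, which contradicts their classification as side edges. The same uniqueness-of-paths argument rules out $\gamma_i$ coinciding with a spine vertex directly. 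Once this is established, $T'$ is a caterpillar with spine $v_1,\dots,v_r$, leaves the $\gamma_i$'s, and $k$ edges.

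For the reverse direction, given a caterpillar $T^*\subseteq T(\S)$ with $k$ edges, spine $v_1,\dots,v_r$, and leaves attached at spine vertices, I would construct the alternating path by visiting the cells $v_1,v_2,\dots,v_r$ in order. Inside cell $v_i$, the path enters through the spine segment from $v_{i-1}$ (or begins with a leaf segment when $i=1$), then traverses each leaf segment attached to $v_i$ by entering one endpoint and bouncing back into $v_i$ at the other, and finally exits through the spine segment to $v_{i+1}$ (or ends with a leaf segment when $i=r$). Since $v_i$ is convex, its incident spine and leaf segments appear as chords in a well-defined cyclic order along $\partial v_i$; visiting them in this cyclic order and routing the connecting segments as straight chords of $v_i$ between consecutive endpoints yields a simple polygonal chain. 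The chain lies in the closures of the $v_i$'s and therefore does not cross any $\S$-segment outside the chain, giving an alternating path compatible with $\S$ through exactly the $k$ segments of $T^*$.
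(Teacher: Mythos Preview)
Your proposal is correct and follows essentially the same approach as the paper. Both proofs track how the connecting (non-$\S$) segments determine a sequence of cells, group consecutive $\S$-segments by shared cell into stars, and observe that the cell sequence forms a path in $T(\S)$; for the converse, both route the path cell-by-cell using convexity. Your write-up is in fact more careful than the paper's on one point: the paper simply asserts that the resulting ``path of stars'' is a caterpillar, whereas you explicitly verify, via uniqueness of paths in a tree, that the off-spine endpoints $\gamma_i$ are pairwise distinct and avoid the spine, so that they really are leaves of $T'$. That extra verification is sound (the key observation being that the spine walk uses distinct edges $s_i$ and hence is a simple path in the tree), and it closes a small gap that the paper leaves implicit.
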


\begin{proof}
	The lemma is trivially true when $k = 1$. Now assume that $k \ge 2$.
	We first prove the direct implication.
	Suppose that there is an alternating path compatible with $\S$
	that goes through a sequence $\S'$ of $k$ segments in $\S$.
	Since every segment bordering two adjacent cells of $C(\S)$
	corresponds to an edge between two vertices in $T(\S)$,
	and every two consecutive segments in $\S'$
	are on the boundary of some common cell,
	it follows that
	every maximal subsequence of consecutive segments in $\S'$
	around a common cell in $C(\S)$
	corresponds to a star of edges incident to a common vertex in $T(\S)$.
	Moreover,
	every three consecutive segments in $\S'$ not around a common cell
	must span two adjacent cells
	with the middle segment as their shared boundary,
	which corresponds to an edge connecting two stars in $T(\S)$.
	Thus the $k$ segments in $\S'$
	correspond to a path of stars in $T(\S)$,
	which is a caterpillar with $k$ edges.

	We next prove the reverse implication.
	Suppose that $T(\S)$ contains a caterpillar with $k$ edges,
	whose path of non-leaf vertices corresponds to
	a sequence $\C$ of cells of $C(\S)$.
	Then the edges of the caterpillar incident to each non-leaf vertex
	correspond to a subfamily of segments in $\S$
	on the boundary of some common cell in $\C$,
	and each edge in the non-leaf path corresponds
	to a segment in $\S$ bordering two adjacent cells in $\C$.
	Since the cells of $C(\S)$ are all convex,
	the disjoint segments on the boundary of each cell
	can be linked into a non-crossing path within the cell,
	starting and ending at any two endpoints of any two segments.
	Then along the sequence $\C$ of cells, these paths
	can be concatenated into a single alternating path compatible with $\S$.
\end{proof}

The following lemma shows another equivalence between
alternating paths among $\S$ and caterpillars obtained from $T$ by contracting edges:

\begin{lemma}\label{lem:eq2}
	$\S$ admits an alternating path among $\S$ going through $k$ segments
	if and only if
	a caterpillar with $k$ edges
	can be obtained from $T(\S)$ by contracting edges.
\end{lemma}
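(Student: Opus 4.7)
The plan is to reduce both directions of the lemma to Lemma~\ref{lem:eq1} via a clean identification between deleting chords from $\S$ and contracting edges in $T(\S)$. Specifically, for any $X \subseteq \S$, deleting the chords in $X$ merges each pair of cells of $C(\S)$ that used to share such a chord, while leaving all other cell-adjacencies intact; this matches exactly the effect of contracting the corresponding edges of $T(\S)$. I will therefore rely on the identity $T(\S \setminus X) = T(\S)/X$ as the bridge between the geometric and combinatorial sides.

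For the direct implication, I will take an alternating path $P$ among $\S$ through $k$ segments and let $X \subseteq \S$ be the set of segments of $\S$ not in $P$ that $P$ crosses. Since $P$ is simple, its segments lie in $\S \setminus X$, and $P$ does not cross any other segment of $\S \setminus X$, the chain $P$ is compatible with $\S \setminus X$. The endpoints of $\S \setminus X$ remain in convex position, so Lemma~\ref{lem:eq1} applied to $\S \setminus X$ produces a caterpillar $K$ with $k$ edges as a subgraph of $T(\S \setminus X) = T(\S)/X$. Because $K$ is a subtree, removing $E(K)$ from $T(\S)/X$ leaves a forest whose components correspond bijectively to the vertices of $K$ (each component contains a unique $K$-vertex, since any two $K$-vertices are joined in $T(\S)/X$ only by a $K$-path); so further contracting all non-$K$ edges of $T(\S)/X$ collapses each component onto its $K$-vertex and yields exactly $K$. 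Composing the two rounds of contractions realizes $K$ as an edge-contraction quotient of $T(\S)$.

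For the reverse implication, I will start from a caterpillar $K$ with $k$ edges obtained from $T(\S)$ by contracting some set $X$ of edges, so that $T(\S)/X = K$. The bridge identity then gives $T(\S \setminus X) = K$, which trivially contains itself as a caterpillar subgraph with $k$ edges. Lemma~\ref{lem:eq1} applied to $\S \setminus X$ now produces an alternating path compatible with $\S \setminus X$ that goes through $k$ segments. Reinterpreted in the arrangement $C(\S)$, this is still a simple polygonal chain whose segments lie in $\S \setminus X \subseteq \S$, so it is an alternating path among $\S$ through $k$ segments; compatibility with the full family $\S$ is not required and is typically lost, since the linking arcs may cross segments of $X$.

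I expect the main obstacle to be verifying the two combinatorial ingredients cleanly: the bridge identity $T(\S \setminus X) = T(\S)/X$, and the fact that contracting the non-$K$ edges of a tree containing $K$ as a subtree yields $K$ itself. Both are routine once one spells out the definitions, and neither requires any geometric or topological machinery beyond what Lemma~\ref{lem:eq1} already supplies.
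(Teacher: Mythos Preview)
Your proof is correct and follows essentially the same route as the paper: both establish the bridge identity $T(\S\setminus X)=T(\S)/X$ and then reduce each direction to Lemma~\ref{lem:eq1}. The only difference is that in the direct implication the paper deletes \emph{all} segments of $\S$ except the $k$ traversed ones, so that $T(\S')$ already has exactly $k$ edges and hence \emph{is} the caterpillar; this sidesteps your extra step of further contracting the non-$K$ edges inside $T(\S\setminus X)$. Your argument that contracting the complement of a subtree $K$ inside a tree yields $K$ is fine, just unnecessary with the paper's cleaner choice of $X$.
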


\begin{proof}
	The deletion of a segment in $\S$ corresponds to
	the merging of two adjacent cells of $C(\S)$,
	and to the contraction of an edge in $T(\S)$.
	For any subfamily $\S'$ of $k$ segments in $\S$,
	any alternating path among $\S$ that goes through all $k$ segments of $\S'$
	is an alternating path compatible with $\S'$,
	and conversely,
	any alternating path compatible with $\S'$ is an alternating path among $\S$.
	By Lemma~\ref{lem:eq1},
	$\S'$ admits an alternating path compatible with $\S'$ going through all $k$ segments
	if and only if 
	$T(\S')$ is a caterpillar with $k$ edges.
\end{proof}

Recall that for $n \ge 1$,
we defined $p(n)$ (respectively, $q(n)$)
as the maximum number $k$
such that any family $\S$ of $n$ disjoint segments in the plane
whose endpoints are in convex position
admits an alternating path among $\S$ (respectively, compatible with $\S$)
going through $k$ segments in $\S$.
By Lemma~\ref{lem:eq1} and Lemma~\ref{lem:eq2},
it follows that for $m \ge 1$,
$p(m)$ (respectively, $q(m)$)
is the maximum number $k$
such that any tree with $m$ edges
can be transformed into a caterpillar with $k$ edges
by contracting edges
(respectively, by removing vertices).
This completes the proof of Proposition~\ref{prp:eq}.

\bigskip
In the next two sections,
we prove Theorem~\ref{thm:conv-sqrt} and Theorem~\ref{thm:conv-log}
in terms of caterpillars and trees.
By convention, we will use the symbol $m$ to denote the number of edges in a graph,
and determine $p(m)$ and $q(m)$ for $m \ge 1$.

\section{Contracting a tree into a caterpillar}

In this section we prove Theorem~\ref{thm:conv-sqrt} that
for $m \ge 1$,
$p(m) = \lceil \sqrt{8m} - 2 \rceil$.
Recall that
$p(m)$
is the maximum number $k$
such that any tree with $m$ edges
can be transformed into a caterpillar with $k$ edges
by contracting edges.
We first prove three lemmas.

For any tree $T$, denote by $\kappa(T)$
the number of leaves in $T$,
plus the diameter of $T$, minus $2$.

\begin{lemma}\label{lem:kappa}
	For any tree $T$,
	the maximum size of a caterpillar
	that can be obtained from $T$ by contracting edges
	is $\kappa(T)$.
	Moreover, for any $1 \le k \le \kappa(T)$,
	a caterpillar of size $k$ can be obtained from $T$
	by contracting edges.
\end{lemma}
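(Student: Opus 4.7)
The plan is to prove Lemma~\ref{lem:kappa} by establishing $\kappa(T)$ as both an upper and a lower bound on the size of a caterpillar obtainable from $T$ by contracting edges. The upper bound will come from a monotonicity argument, and the lower bound from an explicit construction based on a diametral path.

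For the upper bound, I would first observe that every caterpillar $C$ satisfies $|E(C)| = \ell(C) + \mathrm{diam}(C) - 2$, where $\ell(C)$ denotes the number of leaves: the spine of $C$ is a diametral path of length $\mathrm{diam}(C)$ whose two endpoints are leaves, and each of the remaining $\ell(C)-2$ leaves contributes exactly one additional edge to the spine. Next I would check that both $\ell$ and $\mathrm{diam}$ are monotone under edge contraction: any path in $T/e$ lifts to a walk in $T$ of at least the same length, so $\mathrm{diam}(T/e) \le \mathrm{diam}(T)$, and a short case analysis on the degrees of the endpoints of $e$ gives $\ell(T/e) \le \ell(T)$. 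Hence if $T$ can be contracted to a caterpillar $C$, then $|E(C)| = \ell(C) + \mathrm{diam}(C) - 2 \le \kappa(T)$.

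For the lower bound at $k = \kappa(T)$, fix a diametral path $P = v_0 v_1 \cdots v_d$, so that $d = \mathrm{diam}(T)$ and the endpoints $v_0, v_d$ are leaves of $T$. Deleting the edges of $P$ from $T$ decomposes the rest into subtrees $T_{v_0}, \ldots, T_{v_d}$, with each $T_{v_i}$ containing exactly one path vertex $v_i$. For every interior index $1 \le i \le d-1$, contract every edge of $T_{v_i}$ that is not incident to a leaf of $T_{v_i}$; this collapses $T_{v_i}$ to a star centered at $v_i$ whose leaves are precisely the leaves of $T$ lying in $T_{v_i}$. The resulting graph is a caterpillar with spine $v_1 \cdots v_{d-1}$, endpoint-leaves $v_0$ and $v_d$, and $\ell(T)-2$ further leaves attached along the spine, for a total of $d + (\ell(T)-2) = \kappa(T)$ edges; the trivial case $d \le 1$ is handled by noting that $T$ itself is already a caterpillar of size $\kappa(T)$. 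To realize every smaller size $1 \le k < \kappa(T)$, start from this caterpillar and contract leaf edges one at a time: contracting a leaf edge of a caterpillar always yields a caterpillar with one fewer edge, so iterating $\kappa(T)-k$ times produces the required caterpillar.

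The most delicate step is the leaf-monotonicity of contraction, because contracting an edge $uv$ can turn a former interior vertex into a leaf. However, the case split is short: if neither endpoint is a leaf then the merged vertex has degree $\deg(u)+\deg(v)-2 \ge 2$ and no other vertex changes degree; and if $u$ is a leaf then the merged vertex has degree $\deg(v)-1$, so we certainly lose $u$ as a leaf and gain at most one new leaf (only when $\deg(v) = 2$). Every other ingredient is routine bookkeeping on the diametral-path construction, so the two assertions of the lemma follow.
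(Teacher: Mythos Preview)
Your approach matches the paper's: bound above by $\kappa(T)$, realize $\kappa(T)$ via a diametral-path construction, then contract further for smaller $k$. Your upper-bound route (monotonicity of $\ell$ and $\mathrm{diam}$ under contraction, combined with the identity $|E(C)| = \ell(C) + \mathrm{diam}(C) - 2$ for caterpillars) is a clean variant of the paper's direct argument that in a \emph{maximal} contracted caterpillar every leaf is already a leaf of $T$ and the spine lifts to a non-leaf path in $T$.

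There is one small slip in your construction. You write ``contract every edge of $T_{v_i}$ that is not incident to a leaf of $T_{v_i}$,'' but when $v_i$ has exactly one neighbor off the diametral path, the vertex $v_i$ is itself a leaf of $T_{v_i}$, so the edge at $v_i$ survives and the resulting star is not centered at $v_i$. For instance, if $T_{v_i}$ is a path $v_i a b$ of length two with $b$ a leaf of $T$, both edges are incident to a leaf of $T_{v_i}$, nothing gets contracted, and the non-leaf vertex $a$ ends up hanging off the spine at $v_i$, so the result is not a caterpillar. The fix is to say ``incident to a leaf of $T$'' (since $v_i$ is never a leaf of $T$ for interior $i$); this is exactly what the paper does in one line: keep all leaf edges of $T$ and all edges of a diameter path, and contract the rest.
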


\begin{proof}
	When $\kappa(T) = 1$, $T$ must be a tree with a single edge.
	The lemma obviously holds in this case. Now assume that $\kappa(T) \ge 2$.
	Let $T'$ be a caterpillar of the maximum size obtained from a tree $T$ by contracting edges.
	By the maximality of $T'$, each leaf of $T'$ must be a leaf of $T$.
	Moreover, the edges of the non-leaf path of $T'$, if any, must come from
	some non-leaf path in $T$,
	whose length is at most the diameter of $T$ minus $2$.
	Thus the number of edges in $T'$ is at most the number of leaves in $T$
	plus the diameter of $T$ minus $2$, which is $\kappa(T)$.
	This number is actually attainable: simply keep all leaf edges
	and all edges of a diameter path, and contract the rest.
	From such a caterpillar with $\kappa(T)$ edges,
	a caterpillar with $k < \kappa(T)$ edges
	can then be obtained by contracting any $\kappa(T) - k$ edges.
\end{proof}

For $k \ge 1$,
denote by $e(k)$ the maximum size of a tree $T$ with
$\kappa(T) = k$.
For $d \ge 2$ and $l \ge 2$,
denote by $e(d,l)$ the maximum size of a tree
with diameter $d$ and with $l$ leaves.

A \emph{spider} is a tree
with at most one vertex of degree greater than two.
In the next two lemmas, we determine $e(d,l)$ and $e(k)$ by showing that
the extremal cases are realized by spiders.

\begin{lemma}\label{lem:edl}
	For $d \ge 2$ and $l \ge 2$,
	$e(d,l) = \frac12\cdot d\cdot l$ when $d$ is even,
	$e(d,l) = \frac12\cdot (d-1)\cdot l + 1$ when $d$ is odd,
	and there exists a spider $R_{d,l}$ with diameter $d$, $l$ leaves, and $e(d,l)$ edges.
\end{lemma}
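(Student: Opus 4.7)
The plan is to establish the formula for $e(d,l)$ by proving a matching upper bound for any tree with diameter $d$ and $l$ leaves, and by exhibiting a spider $R_{d,l}$ that attains it. For the construction I would take, when $d$ is even, a center vertex with $l$ legs each of length $d/2$; when $d$ is odd, a center with one leg of length $(d+1)/2$ and $l-1$ legs of length $(d-1)/2$. In each case a direct count verifies that the diameter is $d$, the number of leaves is $l$, and the edge count is $\tfrac12 dl$ or $\tfrac12(d-1)l+1$ respectively, so $e(d,l)$ is at least the claimed value.

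For the upper bound, let $T$ be any tree with diameter $d$ and $l$ leaves, fix a diameter path $P=v_0v_1\cdots v_d$, and note that $v_0$ and $v_d$ are leaves of $T$. Deleting the edges of $P$ decomposes $T$ into subtrees $T_0,T_1,\ldots,T_d$ with $T_i$ rooted at $v_i$; here $T_0=\{v_0\}$ and $T_d=\{v_d\}$ are trivial. For each $1\le i\le d-1$, every vertex $u\in V(T_i)\setminus\{v_i\}$ must satisfy $\mathrm{dist}(u,v_i)\le h_i:=\min(i,d-i)$, since otherwise concatenating the $u$-to-$v_i$ path with the longer half of $P$ would produce a path in $T$ of length greater than $d$, contradicting the diameter assumption.

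Let $l_i$ denote the number of leaves of $T$ lying in $V(T_i)\setminus\{v_i\}$; these are exactly the non-root leaves of the rooted tree $T_i$. The key inequality, which follows by a short induction or by observing that every edge of a rooted tree lies on some leaf-to-root path, is $|E(T_i)|\le l_i h_i$. Summing over $i$ and using $\sum_{i=1}^{d-1}l_i=l-2$ gives $|E(T)|=d+\sum_{i=1}^{d-1}|E(T_i)|\le d+\sum_{i=1}^{d-1}l_i h_i\le d+(l-2)\lfloor d/2\rfloor$, where the last step concentrates the available weight $l-2$ on an index maximizing $h_i$. A short parity case analysis simplifies this to $\tfrac12 dl$ when $d$ is even and $\tfrac12(d-1)l+1$ when $d$ is odd, matching $|E(R_{d,l})|$ and hence showing that the spider is extremal.

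The main obstacle is justifying the depth-sum inequality $|E(T_i)|\le l_i h_i$ cleanly and handling boundary cases, in particular $i\in\{1,d-1\}$ where $h_i=1$, and trivial $T_i$ with $l_i=0$. The step is elementary but needs care since the root $v_i$ is itself not a leaf of $T$ and hence is not counted in $l_i$, and one must be sure the bound remains correct when $T_i$ is a single vertex.
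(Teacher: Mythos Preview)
Your argument is correct. The construction of $R_{d,l}$ and the edge counts match the paper exactly, and your upper bound via the diameter-path decomposition goes through: the height bound $h_i=\min(i,d-i)$, the inequality $|E(T_i)|\le l_i h_i$ (every edge lies on a root-to-leaf path of length at most $h_i$), and the leaf count $\sum l_i=l-2$ are all sound, including the boundary cases you flag.

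The paper takes a closely related but slightly different route: instead of fixing a diameter path and hanging subtrees off each of its internal vertices, it roots the whole tree at its \emph{center} (a vertex when $d$ is even, an edge when $d$ is odd), so that every root-to-leaf path has length at most $\lfloor d/2\rfloor$, and then bounds the total edge count by $l\cdot d/2$ (respectively $l\cdot(d-1)/2+1$) in one stroke. Your decomposition distributes the leaves among several roots $v_i$ with varying height caps $h_i$ and then crudely replaces each $h_i$ by $\lfloor d/2\rfloor$; the paper's centering move effectively performs this replacement up front and avoids the intermediate sum. Both arguments rest on the same elementary fact that a rooted tree of height $h$ with $\ell$ leaves has at most $\ell h$ edges, so the difference is organizational rather than conceptual; the center-based version is marginally cleaner, while yours makes the role of the diameter path more explicit.
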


\begin{proof}
	Consider two cases for any tree with diameter $d$, $l$ leaves, and $m$ edges:
\begin{enumerate}\setlength\itemsep{0pt}

		\item
			$d$ is even.
			Then there is an internal node $v$ in the tree whose distance to any other
			node is at most $d/2$.
			The number $m$ of edges in the tree is maximized when
			the paths from $v$ to the $l$ leaves all have the same length $d/2$
			and are edge-disjoint,
			that is, when the tree is a spider $R_{d,l}$ centered at $v$
			with $l$ legs of equal length $d/2$.
			Thus $m \le \frac12\cdot d\cdot l$.

		\item
			$d$ is odd.
			Then there is an edge $\{u,v\}$
			between two internal nodes $u$ and $v$ in the tree
			such that the distance from this edge to any other node
			is at most $(d-1)/2$.
			The number $m$ of edges in the tree is maximized when
			the shortest paths from $\{u,v\}$ to the $l$ leaves
			all have the same length $(d-1)/2$ and are edge-disjoint.
			Thus $m \le \frac12\cdot (d-1)\cdot l + 1$.
			There are multiple extremal cases when $l > 2$.
			In particular, if one of the $l$ leaves is on one side of the edge $\{u,v\}$,
			and the other $l- 1$ leaves are all on the other side,
			then the tree is a spider $R_{d,l}$
			with one leg of length $(d+1)/2$ and $l-1$ legs of length $(d-1)/2$.
			\qedhere

\end{enumerate}
\end{proof}

\begin{lemma}\label{lem:ek}
	For $k \ge 1$,
	$e(k) = \frac18k(k+4)$ when $k \bmod 4 = 0$,
	$e(k) = \frac18(k+2)^2$ when $k \bmod 4 = 2$,
	$e(k) = \frac18(k+1)(k+3)$ when $k \bmod 2 = 1$,
	and there exists a spider $R_k$ with $\kappa(R_k) = k$ and with $e(k)$ edges.
	In particular, $\frac18(k+2)^2 - \frac12 \le e(k) \le \frac18(k+2)^2$,
	and $e(k)$ is strictly increasing for $k \ge 1$.
\end{lemma}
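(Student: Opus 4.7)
\medskip
\noindent\emph{Proof plan.}
The plan is to reduce the problem to a one-variable discrete optimization and
then do a clean case analysis modulo $4$.
By definition $\kappa(T)=l+d-2$, where $l$ and $d$ are the number of leaves
and the diameter of $T$, respectively. So for $k\ge 2$,
a tree attaining $e(k)$ has leaf count $l\ge 2$ and diameter $d\ge 2$
with $d+l=k+2$, and among all such $(d,l)$ the maximum size is $e(d,l)$
as given by Lemma~\ref{lem:edl}. Hence for $k\ge 2$,
\[
  e(k)\;=\;\max\bigl\{\,e(d,k+2-d)\ \bigm|\ 2\le d\le k\,\bigr\},
\]
and the case $k=1$ is handled separately (the only tree is a single edge).

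Substituting Lemma~\ref{lem:edl}, the objective to maximize is
$f_{\text{even}}(d)=\tfrac12 d(k+2-d)$ over even $d$, and
$f_{\text{odd}}(d)=\tfrac12 (d-1)(k+2-d)+1$ over odd $d$.
Both are concave quadratics in $d$, so the continuous optima are
$d=(k+2)/2$ in the even case and $d=(k+3)/2$ in the odd case,
with respective values $(k+2)^2/8$ and $(k+1)^2/8+1$.
I would now split into the four residue classes of $k$ modulo $4$ and,
in each, round $d$ to the nearest integer of the required parity
(trying both floor and ceiling) and compare the best even-$d$ value with
the best odd-$d$ value. A short calculation shows that in every class
the maximizer is an even $d$, yielding the stated formulas
$k(k+4)/8$, $(k+1)(k+3)/8$, $(k+2)^2/8$, $(k+1)(k+3)/8$ according to
$k\bmod 4\in\{0,1,2,3\}$ respectively.
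This is the one place where real arithmetic happens, but it is routine;
the main obstacle (such as it is) is just keeping the parity bookkeeping
straight across the four cases.

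Since the extremal $e(d,l)$ in Lemma~\ref{lem:edl} is realized by a spider
$R_{d,l}$, taking $R_k:=R_{d^{\ast},l^{\ast}}$ for the maximizing pair
$(d^{\ast},l^{\ast})$ provides the desired spider with $\kappa(R_k)=k$
and $|E(R_k)|=e(k)$. The upper bound $e(k)\le (k+2)^2/8$ is immediate from
$f_{\text{even}}(d)\le (k+2)^2/8$ and
$f_{\text{odd}}(d)\le (k+1)^2/8+1=(k+2)^2/8-(2k-7)/8\le (k+2)^2/8$
for $k\ge 4$, with the small cases checked by hand; the lower bound
$e(k)\ge (k+2)^2/8-1/2$ follows by inspecting each of the three closed
forms and noting the gap to $(k+2)^2/8$ is $0$, $1/2$, or $1/2$.
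Finally, strict monotonicity of $e(k)$ is verified by computing
$e(k+1)-e(k)$ separately for each residue class of $k$ modulo $4$:
each difference is a positive linear function of $k$ (concretely of the
form $\lceil k/2\rceil$ plus a small constant), so $e(k)$ is
strictly increasing for $k\ge 1$.
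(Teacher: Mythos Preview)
Your approach is essentially the paper's: reduce to $e(k)=\max_{d+l=k+2} e(d,l)$, split by $k\bmod 4$, compare the best even-$d$ value against the best odd-$d$ value via Lemma~\ref{lem:edl}, and read off the spider $R_k$, the sandwich bounds, and monotonicity from the resulting closed forms. A few small arithmetic slips to fix when you write it out: for odd $k$ the gap $(k+2)^2/8-e(k)$ is $1/8$, not $1/2$; the quantity $(k+2)^2/8-\bigl((k+1)^2/8+1\bigr)$ equals $(2k-5)/8$, not $(2k-7)/8$; and the increments $e(k{+}1)-e(k)$ work out to $(k+c)/4$ for $c\in\{1,2,3,4\}$, not $\lceil k/2\rceil+\text{const}$ (the paper sidesteps this case split by using the sandwich bounds directly: $e(k{+}1)-e(k)\ge (k+3)^2/8-1/2-(k+2)^2/8>0$).
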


\begin{proof}
	The only tree $T$ with $\kappa(T) = 1$ is $P_2$, a path with a single edge.
	Thus $e(1) = 1$, which equals $\frac18(1+1)(1+3)$,
	and we have $R_1 = P_2$.
	Also, the only tree $T$ with $\kappa(T) = 2$ is $P_3$, a path with two edges.
	Thus $e(2) = 2$, which equals $\frac18(2+2)^2$,
	and we have $R_2 = P_3$.
	For $k \ge 3$, we have
	$$
	e(k) = \max\{\, e(d,l) \mid d \ge 2,\, l \ge 2,\, l + d - 2 = k \,\}.
	$$
	To determine $e(k)$, we evaluate the maxima of $e(d,l)$
	for even $d$ and for odd $d$ separately,
	and apply Lemma~\ref{lem:edl}.
	Consider four cases:
\begin{itemize}\setlength\itemsep{0pt}

		\item
			$k \bmod 4 = 2$.
			Then
			\begin{align*}
				\max_{d \textrm{ even}} e(d,l)
				&= e\left( \frac{k+2}2, \frac{k+2}2 \right)
				= \frac{(k+2)^2}8,
				\\
				\max_{d \textrm{ odd}} e(d,l)
				&= e\left( \frac{k+4}2, \frac{k}2 \right)
				= \frac{k(k+2)}8 + 1.
			\end{align*}
			Since $\frac18(k+2)^2 - \frac18 k(k+2) - 1 = \frac14(k+2) - 1 \ge 1$
			for $k \ge 6$,
			we have
			$e(k) = \frac18(k+2)^2$.

		\item
			$k \bmod 4 = 0$.
			Then
			\begin{align*}
				\max_{d \textrm{ even}} e(d,l)
				&= e\left( \frac{k}2, \frac{k+4}2 \right)
				= e\left( \frac{k+4}2, \frac{k}2 \right)
				= \frac{k(k+4)}8,
				\\
				\max_{d \textrm{ odd}} e(d,l)
				&= e\left( \frac{k+2}2, \frac{k+2}2 \right)
				= \frac{k(k+2)}8 + 1.
			\end{align*}
			Since $\frac18 k(k+4) - \frac18 k(k+2) - 1 = \frac14 k - 1 \ge 0$ for $k \ge 4$,
			we have
			$e(k) = \frac18 k(k+4) = \frac18(k+2)^2 - \frac12$.

		\item
			$k \bmod 4 = 3$.
			Then
			\begin{align*}
				\max_{d \textrm{ even}} e(d,l)
				&= e\left( \frac{k+1}2, \frac{k+3}2 \right)
				= \frac{(k+1)(k+3)}8,
				\\
				\max_{d \textrm{ odd}} e(d,l)
				&= e\left( \frac{k+3}2, \frac{k+1}2 \right)
				= \frac{(k+1)^2}8 + 1.
			\end{align*}
			Since $\frac18(k+1)(k+3) - \frac18(k+1)^2 - 1 = \frac14(k+1) - 1 \ge 0$
			for $k \ge 3$,
			we have
			$e(k) = \frac18(k+1)(k+3) = \frac18(k+2)^2 - \frac18$.

		\item
			$k \bmod 4 = 1$.
			Then
			\begin{align*}
				\max_{d \textrm{ even}} e(d,l)
				&= e\left( \frac{k+3}2, \frac{k+1}2 \right)
				= \frac{(k+1)(k+3)}8,
				\\
				\max_{d \textrm{ odd}} e(d,l)
				&= e\left( \frac{k+1}2, \frac{k+3}2 \right)
				= e\left( \frac{k+5}2, \frac{k-1}2 \right)
				= \frac{(k-1)(k+3)}8 + 1.
			\end{align*}
			Since $\frac18(k+1)(k+3) - \frac18(k-1)(k+3) - 1 = \frac14(k+3) - 1 \ge 1$
			for $k \ge 5$,
			we again have
			$e(k) = \frac18(k+1)(k+3) = \frac18(k+2)^2 - \frac18$.

	\end{itemize}
	The last two cases
	$k \bmod 4 = 3$
	and
	$k \bmod 4 = 1$
	can be combined into a single case $k \bmod 2 = 1$.
	In any case,
	there exists by Lemma~\ref{lem:edl}
	a spider $R_k = R_{d,l}$ with $\kappa(R_k) = \kappa(R_{d,l}) = l + d - 2 = k$
	and with $e(k) = e(d,l)$ edges,
	where $d$ is even.
	In particular, we can let
	$R_3 = R_{2,3}$, $R_4 = R_{2,4}$, $R_5 = R_{4,3}$, $R_6 = R_{4,4}$,
	$R_7 = R_{4,5}$, $R_8 = R_{4,6}$, $R_9 = R_{6,5}$, $R_{10} = R_{6,6}$,
	$R_{11} = R_{6,7}$, $R_{12} = R_{6,8}$.
	Refer to Figure~\ref{fig:r} for illustrations of $R_k$ for $1 \le k \le 12$.

	Note that $\frac18(k+2)^2 - \frac12 \le e(k) \le \frac18(k+2)^2$ for all $k \ge 1$.
	It follows that
	$$
	e(k + 1) - e(k) \ge \frac18(k+3)^2 - \frac12 - \frac18(k+2)^2
	= \frac18(2k+5) - \frac12 > 0.
	$$
	Thus $e(k)$ is strictly increasing for $k \ge 1$.
\end{proof}

\begin{figure}[htbp]
	\centering\includegraphics{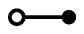}\includegraphics{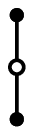}\includegraphics{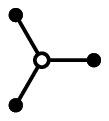}\includegraphics{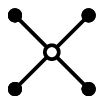}\\\includegraphics{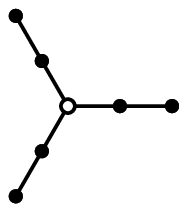}\includegraphics{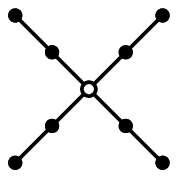}\includegraphics{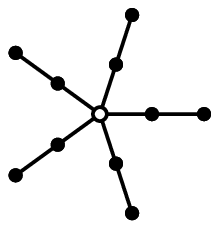}\includegraphics{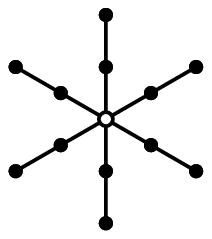}\\\includegraphics{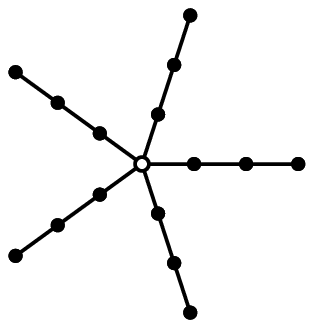}\includegraphics{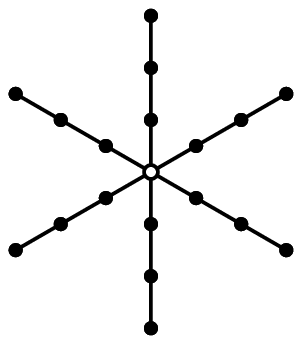}\includegraphics{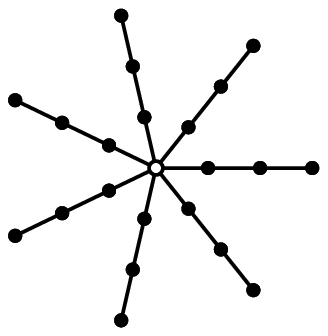}\includegraphics{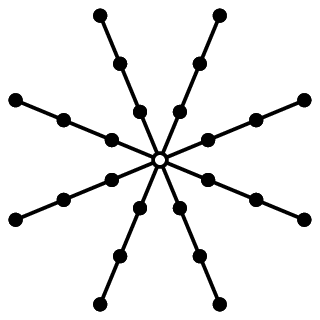}
	\caption{Spiders $R_k$ with $e(k)$ edges for $1 \le k \le 12$.}\label{fig:r}
\end{figure}

We are now ready to prove that
$p(m) = \lceil \sqrt{8m} - 2 \rceil$
for $m \ge 1$.
For $m = 1$,
we clearly have
$p(1) = \lceil \sqrt{8} - 2 \rceil = 1$.
Now fix $m \ge 2$, and let $k = \lceil \sqrt{8m} - 2 \rceil$.
Then $k \ge 2$,
and $\sqrt{8m} - 2 \le k < \sqrt{8m} - 1$.

From $k \ge \sqrt{8m} - 2$,
it follows that
$e(k) \ge \frac18(k+2)^2 - \frac12 \ge \frac18\cdot (\sqrt{8m} - 2 + 2)^2 - \frac12 = m - \frac12$,
and hence
$e(k) \ge m$
since $e(k)$ is an integer.
From $k < \sqrt{8m} - 1$,
it follows that
$e(k-1) \le \frac18(k+1)^2 < \frac18\cdot(\sqrt{8m} - 1 + 1)^2 = m$.
In summary, we have
$e(k-1) < m \le e(k)$.

By Lemma~\ref{lem:ek}, $e(k)$ is strictly increasing for $k \ge 1$.
Thus the inequality $e(k-1) < m$ implies that
any tree with $\kappa(T) \le k-1$ must have fewer than $m$ edges.
In other words,
any tree $T$ with $m$ edges must have $\kappa(T) \ge k$,
and hence can be transformed by contracting edges
into a caterpillar with $k$ edges, by Lemma~\ref{lem:kappa}.

On the other hand,
corresponding to the extremal cases of Lemma~\ref{lem:edl} and Lemma~\ref{lem:ek},
there exists a spider $R_k$ with $\kappa(R_k) = k$ and with $e(k)$ edges.
Since $\kappa(R_k) = k$,
it follows by Lemma~\ref{lem:kappa} that
$R_k$ cannot be transformed by contracting edges into a caterpillar with more than $k$ edges.
By the inequality $m \le e(k)$,
there exists a subgraph of $R_k$ with $m$ edges
which cannot be thus transformed either.
Thus
$p(m) = k = \lceil \sqrt{8m} - 2 \rceil$.
This completes the proof of Theorem~\ref{thm:conv-sqrt}.

\section{Pruning a tree into a caterpillar}

In this section we prove Theorem~\ref{thm:conv-sqrt}.
Recall that
$q(m)$
is the maximum number $k$
such that any tree with $m$ edges
can be transformed into a caterpillar with $k$ edges
by removing vertices.
In other words,
$q(m)$
is the maximum number $k$
such that any tree with $m$ edges
contains a caterpillar with $k$ edges
as a vertex-induced subgraph.

For $k \ge 1$,
let $e(k)$ be the maximum size $m$ of a tree in which
the maximum size of a caterpillar is exactly $k$.
To derive $q(m)$ for $m \ge 1$,
we first determine the exact values of $e(k)$ for $k \ge 1$.

\subsection{Single branch: $f(k)$}

In a rooted tree,
we call any caterpillar
consisting of all edges incident to a path from the root to a leaf
a \emph{very hungry} caterpillar.
For $k \ge 1$,
let $f(k)$
be the maximum size of a rooted tree in which
the root is incident to a single edge,
and the maximum size of a very hungry caterpillar is $k$.

To determine $f(k)$,
we only need to consider \emph{symmetric} trees
in which all nodes of the same depth have the same number of children.
Let $T$ be a rooted tree in which
the root is incident to a single edge,
and the maximum size of a very hungry caterpillar is $k$.
Suppose that $T$ is not symmetric.
Let $d \ge 0$ be the minimum depth at which there are nodes of different degrees in $T$.
Let $v$ be a node at depth $d$ such that the subtree of $T$ rooted at $v$ has the maximum size.
By replacing every subtree rooted at a node at depth $d$ by a copy of the subtree rooted at $v$,
we either make $T$ symmetric,
or increase the minimum depth at which there are nodes of different degrees in $T$.
Moreover, the size of $T$ does not decrease,
and the maximum size of a very hungry caterpillar in $T$ does not increase.
By repeating such replacements, we can make $T$ symmetric.
Then, by splitting the single edge incident to the root of $T$ into a path if necessary,
we can ensure that the size of $T$ does not decrease,
and the maximum size of a very hungry caterpillar in $T$ is exactly $k$.

Let $T$ be a symmetric tree of height $h \ge 1$
in which the root is incident to a single edge.
For $0 \le d \le h$,
let $c_d$ be the number of children of each node with depth $d$ in $T$,
with $c_0 = 1$ for the root,
and $c_h = 0$ for the leaves.
Then all very hungry caterpillars in $T$ have the same size
$\sum_{d = 0}^h c_d$.

We can assume that $c_1 \ge \ldots \ge c_h$ because,
if $c_d < c_{d+1}$ for some $1 \le d < h$,
then we can swap the two elements $c_d$ and $c_{d+1}$
in the sequence $\langle c_0,c_1,\ldots,c_h \rangle$,
such that the resulting sequence corresponds to a symmetric tree with
more edges between nodes of depths $d$ and $d+1$,
and with the same number of edges in other layers,
while the sum $\sum_{d = 0}^h c_d$ remains the same.

Moreover, we can assume that $c_1 \le 3$ because,
if $c_1 \ge 4$,
then we can replace the element $c_1$
in the sequence $\langle c_0,c_1,\ldots,c_h \rangle$
by two elements $c_1 - 2$ and $2$,
such that the resulting sequence of $h+2$ elements
corresponds to a symmetric tree with height $h+1$
and with more edges,
but still has the same sum.

In summary,
we can assume that $T$ is a symmetric tree of height $h \ge 1$,
with $c_0 = 1$ and $3 \ge c_1 \ge \ldots \ge c_h = 0$,
and we call such trees \emph{beautiful trees}.
Thus for $k \ge 1$,
$f(k)$
is the maximum size of a beautiful tree $T$
with $\sum_{d = 0}^h c_d = k$.

\begin{lemma}\label{lem:f}
	$f(1) = 1$,
	$f(2) = 2$,
	$f(3) = 3$,
	$f(4) = 5$,
	$f(5) = 7$.
\begin{itemize}\setlength\itemsep{0pt}
		\item
			For $k = 3i$ with $i \ge 2$,
			$f(k) = \frac12(23\cdot 3^{(k-6)/3} - 1)$.
		\item
			For $k = 3i + 1$ with $i \ge 2$,
			$f(k) = \frac12(33\cdot 3^{(k-7)/3} - 1)$.
		\item
			For $k = 3i + 2$ with $i \ge 2$,
			$f(k) = \frac12(47\cdot 3^{(k-8)/3} - 1)$.
\end{itemize}
In particular,
\,(a)\, for $k \ge 2$, $\frac{f(k)}{f(k - 1)} \ge \frac75$,
\,(b)\, for $k \ge 7$, $\frac{f(k)}{f(k - 1)} < \frac32$.
Moreover,
for $k \ge 1$, there exists a beautiful tree $B_k$ with exactly $f(k)$ edges
such that the maximum size of a very hungry caterpillar in $B_k$ is $k$,
and the maximum size of any caterpillar in $B_k$ is less than $2k$.
\end{lemma}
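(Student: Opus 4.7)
The plan is to reduce $f(k)$ to a finite optimization over a three-parameter family of beautiful trees, pin down the optimum via local exchange arguments, and then verify the ratio bounds and the caterpillar property of the extremal tree $B_k$ directly from the structure.

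Since a beautiful tree is described by a non-increasing sequence $(c_1, \ldots, c_{h-1})$ in $\{1, 2, 3\}$ summing to $k - 1$, it is determined by the triple $(a, b, c)$ counting threes, twos, and ones with $3a + 2b + c = k - 1$. A direct geometric-sum calculation gives
\[
E(a, b, c) = 1 + c_1 + c_1 c_2 + \cdots + c_1 \cdots c_{h-1} = \tfrac12\bigl(3^a\bigl(2^{b+1}(c + 2) - 1\bigr) - 1\bigr),
\]
so $f(k) = \max E(a, b, c)$ over nonnegative integers with $3a + 2b + c = k - 1$. The main step is to identify the maximizer via four sum-preserving exchanges, each evaluated directly from the closed form: $(a, b, c) \mapsto (a, b + 1, c - 2)$ (valid for $c \ge 2$) strictly increases $E$ iff $c > 2$; $(a, b, c) \mapsto (a - 1, b + 1, c + 1)$ (valid for $a \ge 1$) strictly increases $E$ iff $c = 0$; $(a, b, c) \mapsto (a + 1, b - 1, c - 1)$ (valid for $b, c \ge 1$) strictly increases $E$ when $c \ge 2$ and $b \ge 2$; and $(a, b, c) \mapsto (a + 2, b - 3, c)$ (valid for $b \ge 3$) strictly increases $E$ when $b \ge 4$. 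Chaining these moves forces every optimum with $k \ge 6$ to have $c = 1$ and $b \in \{1, 2, 3\}$, and a short case split on $k \bmod 3$ pins the optimum to $(a, b, c) = ((k - 4)/3, 1, 1)$, $((k - 6)/3, 2, 1)$, or $((k - 8)/3, 3, 1)$ for $k \equiv 1, 0, 2 \pmod 3$ respectively; substitution into $E$ yields the three claimed formulas, and the residual cases $k \le 5$ are settled by direct enumeration.

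The ratio bounds follow from the observation that the three limiting values $47/33$, $69/47$, $33/23$ --- arising as $f(3i+2)/f(3i+1)$, $f(3i+3)/f(3i+2)$, $f(3i+4)/f(3i+3)$ for large $i$ --- all lie strictly in $[7/5, 3/2)$, while the $-1/2$ additive error in $E$ is absorbed by a finite check ($k \le 6$ for the lower bound, tight at $k = 5$; $k \le 10$ for the upper bound). Finally, let $B_k$ denote the symmetric beautiful tree realizing the optimal $(a, b, c)$; by construction $B_k$ has $f(k)$ edges and every very hungry caterpillar in it has size $\sum_d c_d = k$. For an arbitrary induced caterpillar in $B_k$, its spine is an induced path. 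Since the root of $B_k$ has degree $1$, the longest spine runs between two depth-$h$ leaves through the unique depth-$1$ vertex $v$, of length $2(h - 1)$; a telescoping sum of the off-spine contributions --- $c_1 - 1$ at $v$ (counting the root) and $c_d - 1$ at each of the two spine vertices of depth $d$ for $2 \le d \le h - 1$ --- gives total caterpillar size exactly $2k - c_1 - 1 < 2k$ when $c_1 \ge 2$, with the degenerate case $c_1 = 1$ (in which $B_k$ is a path of length $k$) handled immediately; alternative spines confined to a single subtree of $v$ yield no larger caterpillar.
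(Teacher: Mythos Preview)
Your approach is sound and genuinely different from the paper's. The paper works recursively: it derives the one-step recurrence $f(k)=\max_{1\le c_1\le 3}\bigl(c_1\cdot f(k-c_1)+1\bigr)$, tabulates $f(1),\ldots,f(11)$, and then observes that since a beautiful tree with top branching $c_1$ contains $c_1$ copies of a beautiful tree whose top branching is $c_2\le c_1$, the optimal top branching $\hat c(k)$ is nondecreasing in $k$; as $\hat c(9)=\hat c(10)=\hat c(11)=3$, this forces $f(k)=3f(k-3)+1$ for all $k\ge 9$, and the closed forms drop out. Your parametrization by $(a,b,c)$ with the explicit geometric-sum formula for $E(a,b,c)$, followed by sum-preserving exchange moves, is a static optimization argument that bypasses both the recurrence and the monotonicity of $\hat c$. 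It also yields the sharper caterpillar bound $2k-c_1-1$ in place of the paper's simpler ``two very hungry caterpillars cover any caterpillar, and both contain the root edge'' bound of $2k-1$.

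One claim needs tightening. The four exchanges do \emph{not} by themselves force $c=1$ and $b\in\{1,2,3\}$ at an optimum. For example, at $k=7$ the triple $(a,b,c)=(0,3,0)$ has $E=15$, yet none of your four moves applied there is value-increasing (exchanges~1 and~3 require $c\ge 1$, exchange~2 requires $a\ge 1$, and exchange~4 to $(2,0,0)$ strictly decreases), so no chain of non-decreasing moves escapes it even though the true optimum $(1,1,1)$ has $E=16$. More generally the exchanges leave a finite residual list: $c\in\{0,1,2\}$ with $b\le 3$ when $c=1$, $b\le 1$ when $c=2$, and $a=0$ with $b\le 4$ when $c=0$; also $b=0$ with $c=1$ is not excluded. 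All of these extra candidates are easily eliminated inside your ``short case split on $k\bmod 3$'' by direct comparison of $E$-values (e.g.\ $5\cdot 3^a<47\cdot 3^{a-2}$ disposes of $(a,0,1)$), so the overall argument goes through; only the intermediate sentence is overstated.
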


\begin{figure}[htbp]
	\centering\includegraphics{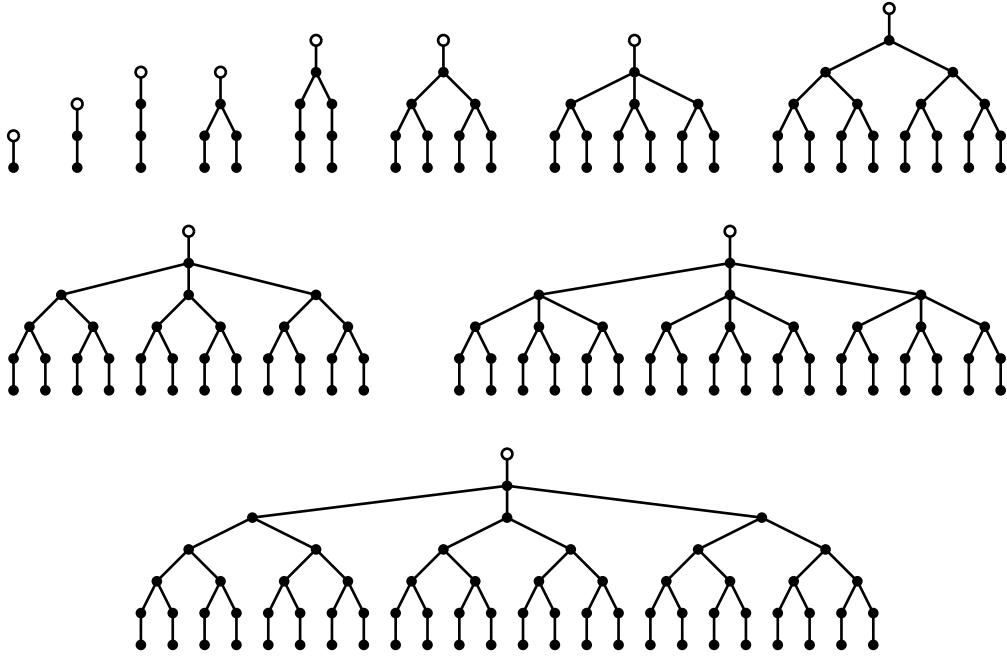}
	\caption{Beautiful trees $B_k$ with $f(k)$ edges for $1 \le k \le 11$.}\label{fig:f}
\end{figure}

\begin{proof}
	Clearly, $f(1) = 1$, and $B_1$ is the tree consisting of a single edge.

	For $k \ge 2$,
	let $T$ be a beautiful tree and height $h \ge 2$,
	with $\sum_{d = 0}^h c_d = k$,
	where $c_0 = 1$ and $1 \le c_1 \le 3$.
	Then the sequence
	$\langle c'_0,c'_1,\ldots,c'_{h-1} \rangle = \langle 1,c_2,\ldots,c_h \rangle$
	corresponds to a beautiful tree $T'$ of height $h-1$,
	with $\sum_{d = 0}^{h-1} c'_d = 1 + \sum_{d = 2}^h c_d = k - c_1$,
	which is a subtree of $T$.
	Indeed $T$ contains $c_1$ edge-disjoint copies of $T'$ with a shared root
	at the lower end of the single edge from the root of $T$.
	If $T$ has the maximum size $f(k)$, then $T'$ must have the maximum size $f(k - c_1)$.
	By enumerating $c_1$, we have the recurrence
	\begin{equation}\label{eq:f}
		f(k) = \max\{\, c_1\cdot f(k-c_1) + 1 \mid 1 \le c_1 < k \,\}.
	\end{equation}
	Moreover, since we can assume that $c_1 \le 3$,
	$$
	f(k) = \max\{\, c_1\cdot f(k-c_1) + 1 \mid 1 \le c_1 \le \min\{ k - 1, 3 \} \,\}.
	$$
	By the recurrence, we can derive $f(k)$ for $k = 2,\ldots,11$ sequentially:
	\begin{align*}
		f(1) &= 1,
		\\
		f(2) &= 1\cdot f(1) + 1 = 2,
		\\
		f(3) &= 1\cdot f(2) + 1 = 2\cdot f(1) + 1 = 3,
		\\
		f(4) &= 2\cdot f(2) + 1 = 5,
		\\
		f(5) &= 2\cdot f(3) + 1 = 3\cdot f(2) + 1 = 7,
		\\
		f(6) &= 2\cdot f(4) + 1 = 11,
		\\
		f(7) &= 3\cdot f(4) + 1 = 16,
		\\
		f(8) &= 2\cdot f(6) + 1 = 23,
		\\
		f(9) &= 3\cdot f(6) + 1 = 34,
		\\
		f(10) &= 3\cdot f(7) + 1 = 49,
		\\
		f(11) &= 3\cdot f(8) + 1 = 70.
	\end{align*}
	Let $\hat c(k)$ be the minimum $c_1$, $1 \le c_1 \le \min\{ k - 1, 3 \}$,
	such that $f(k) = c_1\cdot f(k - c_1) + 1$.
	Then $\hat c(k) = 1,1,2,2,2,3,2,3,3,3$ for $k = 2,\ldots,11$.
	In particular, $\hat c(k) = 3$ for $9 \le k \le 11$.
	Recall that a beautiful tree $T$ with sequence $\langle 1,c_1,c_2,\ldots,c_h \rangle$
	contains $c_1$ copies of a beautiful tree $T'$
	with sequence $\langle 1,c_2,\ldots,c_h \rangle$.
	Since $3 \ge c_1 \ge c_2$, we must have $\hat c(k) = 3$,
	and hence $f(k) = 3\cdot f(k - 3) + 1$, for $k \ge 9$.
	Thus we can determine $f(k)$ for $k \ge 6$ as follows.
\begin{itemize}\setlength\itemsep{0pt}
		\item
			For $k = 3i$ with $i \ge 2$,
			it follows from $f(6) = 11$ that
			$f(k) = \frac{23}2\cdot 3^{(k-6)/3} - \frac12$.

		\item
			For $k = 3i + 1$ with $i \ge 2$,
			it follows from $f(7) = 16$ that
			$f(k) = \frac{33}2\cdot 3^{(k-7)/3} - \frac12$.

		\item
			For $k = 3i + 2$ with $i \ge 2$,
			it follows from $f(8) = 23$ that
			$f(k) = \frac{47}2\cdot 3^{(k-8)/3} - \frac12$.
\end{itemize}

Refer to Figure~\ref{fig:f}.
For $k \ge 2$,
let $B_k$ be the beautiful tree with a single edge incident to the root,
whose lower end is the shared root of $c(k)$ copies of $B_{k'}$,
where $k' = k - \hat c(k)$.
Then the maximum size of a very hungry caterpillar in $B_k$ is exactly $k$.
Moreover,
since any caterpillar in $B_k$ can be covered by two very hungry caterpillars,
which always contain the single edge incident to the root,
the maximum size of any caterpillar in $B_k$ is at most $2k - 1$.

It remains to prove the two inequalities (a) and (b).

We first prove (a) that for $k \ge 2$,
$\frac{f(k)}{f(k - 1)} \ge \frac75$.
It is easy to verify this for $2 \le k \le 6$.
For $k \ge 7$, consider three cases:
\begin{itemize}\setlength\itemsep{0pt}
		\item
			For $k = 3i + 1$ with $i \ge 2$,
			$$
			\frac{f(k)}{f(k - 1)}
			= \frac{\frac{33}2\cdot 3^{(k-7)/3} - \frac12}{\frac{23}2\cdot 3^{(k-7)/3} - \frac12}
			= \frac{33\cdot 3^{(k-7)/3} - 1}{23\cdot 3^{(k-7)/3} - 1}
			\ge \frac{33\cdot 3^{(k-7)/3}}{23\cdot 3^{(k-7)/3}}
			= \frac{33}{23} > \frac75.
			$$
		\item
			For $k = 3i + 2$ with $i \ge 2$,
			$$
			\frac{f(k)}{f(k - 1)}
			= \frac{\frac{47}2\cdot 3^{(k-8)/3} - \frac12}{\frac{33}2\cdot 3^{(k-8)/3} - \frac12}
			= \frac{47\cdot 3^{(k-8)/3} - 1}{33\cdot 3^{(k-8)/3} - 1}
			\ge \frac{47\cdot 3^{(k-8)/3}}{33\cdot 3^{(k-8)/3}}
			= \frac{47}{33} > \frac75.
			$$
		\item
			For $k = 3i$ with $i \ge 3$,
			$$
			\frac{f(k)}{f(k - 1)}
			= \frac{\frac{23}2\cdot 3^{(k-6)/3} - \frac12}{\frac{47}2\cdot 3^{(k-9)/3} - \frac12}
			= \frac{69\cdot 3^{(k-9)/3} - 1}{47\cdot 3^{(k-9)/3} - 1}
			\ge \frac{69\cdot 3^{(k-9)/3}}{47\cdot 3^{(k-9)/3}}
			= \frac{69}{47} > \frac75.
			$$
\end{itemize}

We next prove (b) that for $k \ge 7$,
$\frac{f(k)}{f(k - 1)} < \frac32$.
Again consider three cases:
\begin{itemize}\setlength\itemsep{0pt}
		\item
			For $k = 3i + 1$ with $i \ge 2$,
			$$
			\frac{f(k)}{f(k - 1)}
			= \frac{33\cdot 3^{(k-7)/3} - 1}{23\cdot 3^{(k-7)/3} - 1}
			\le \frac{33\cdot 3^{(k-7)/3} - 3^{(k-7)/3}}{23\cdot 3^{(k-7)/3} - 3^{(k-7)/3}}
			= \frac{33 - 1}{23 - 1} < \frac32.
			$$
		\item
			For $k = 3i + 2$ with $i \ge 2$,
			$$
			\frac{f(k)}{f(k - 1)}
			= \frac{47\cdot 3^{(k-8)/3} - 1}{33\cdot 3^{(k-8)/3} - 1}
			\le \frac{47\cdot 3^{(k-8)/3} - 3^{(k-8)/3}}{33\cdot 3^{(k-8)/3} - 3^{(k-8)/3}}
			= \frac{47 - 1}{33 - 1} < \frac32.
			$$
		\item
			For $k = 3i$ with $i \ge 3$,
			$$
			\frac{f(k)}{f(k - 1)}
			= \frac{69\cdot 3^{(k-9)/3} - 1}{47\cdot 3^{(k-9)/3} - 1}
			\le \frac{69\cdot 3^{(k-9)/3} - 3^{(k-9)/3}}{47\cdot 3^{(k-9)/3} - 3^{(k-9)/3}}
			= \frac{69 - 1}{47 - 1} < \frac32.\qedhere
			$$
\end{itemize}
\end{proof}

\subsection{Multiple branches: $g(k)$}

Let $T$ be a tree with at least two edges,
and let $C$ be a caterpillar with the maximum number $k \ge 2$ edges in $T$.
Then there exists a non-leaf vertex $v$ in $C$,
of the same degree $r \ge 2$ in both $C$ and $T$,
that connects $r$ edge-disjoint parts of $C$,
including $r - 2$ single edges, and two smaller caterpillars
of $x \ge 1$ and $y \ge 1$ edges,
where $x + y + r - 2 = k$,
and $x \le y \le \lceil k/2 \rceil$.
These $r$ parts of $C$ are in $r$ subtrees of $T$, respectively;
in particular, the two smaller caterpillars
are very hungry caterpillars in two subtrees of $T$ rooted at $v$.
The size of the subtree containing the very hungry caterpillar of size $x$
is at most $f(x)$.
The size of the subtree containing the very hungry caterpillar of size $y$
is at most $f(y)$.
Moreover, each of the other $r-2$ subtrees can have size at most $f(x)$,
because otherwise we can replace the caterpillar of size $x$ by a larger caterpillar.
Thus the size of $T$ is at most $f(y) + (r - 1)\cdot f(x)$.

For $k \ge r \ge 2$, define
$$
g(r, k) = \max\{\, f(y) + (r - 1)\cdot f(x)
\mid x + y + r - 2 = k,\, 1 \le x \le y \le \lceil k/2 \rceil \,\}.
$$
For $k \ge 2$, define
$$
g(k) = \max\{\, g(r, k) \mid 2 \le r \le k \,\}.
$$
Then $g(k)$ is an upper bound on $e(k)$.
The following lemma shows that $e(k) = g(k)$ for $k \ge 2$.

\begin{lemma}\label{lem:g}
	$g(2) = 2$,
	$g(3) = 3$,
	$g(4) = 4$,
	$g(5) = 6$,
	$g(6) = 8$,
	$g(7) = 10$,
	$g(8) = 12$,
	$g(9) = 15$,
	$g(10) = 20$,
	$g(11) = 25$,
	$g(12) = 30$,
	$g(13) = 35$,
	$g(14) = 44$.
\begin{itemize}\setlength\itemsep{0pt}
		\item
			For $k = 2 i$ with $i \ge 8$,
			$g(k) = 6 f(\frac{k - 4}2)$.
		\item
			For $k = 2 i + 1$ with $i \ge 7$,
			$g(k) = 5 f(\frac{k - 3}2)$.
\end{itemize}
Moreover,
for $k \ge 2$, there exists a tree $T_k$ with exactly $g(k)$ edges
such that the maximum size of a caterpillar in $T_k$ is $k$.
\end{lemma}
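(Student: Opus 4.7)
The proof strategy is to verify the small cases $2 \le k \le 14$ by direct enumeration of valid triples $(r, x, y)$ with $x + y + r - 2 = k$ and $1 \le x \le y \le \lceil k/2 \rceil$, computing $f(y) + (r-1) f(x)$ from the explicit values of $f$ in Lemma~\ref{lem:f}, and to establish the asymptotic formulas for $k \ge 15$ by analyzing the maximizer of $g(r, k) = f(y) + (r-1) f(x)$ using the two bounds $7/5 \le f(k)/f(k-1) < 3/2$ for $k \ge 7$ from Lemma~\ref{lem:f}(a)(b).

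The key monotonicity observation is that, for fixed $y$, passing from $r$ to $r+1$ (so that $x$ decreases by one) changes $g(r, k)$ by $r f(x-1) - (r-1) f(x)$, which is positive iff $r/(r-1) > f(x)/f(x-1)$. Since $r/(r-1) > 3/2$ for $r \le 2$ and $r/(r-1) \le 4/3 < 7/5$ for $r \ge 4$, the optimum for each fixed $y$ is either $r = 4$ when feasible (i.e., $x = k - y - 2 \le y$), or the minimum $r$ with $x \le y$ otherwise. Enumerating $y$ in the asymptotic range gives, for even $k = 2i$, the candidates $g_0 = f(i) + 3 f(i-2)$ at $(y,r) = (i,4)$ and $g_s = (2s+2) f(i-s)$ at $(y,r) = (i-s, 2s+2)$ for $s \ge 1$; and for odd $k = 2i+1$, the candidates $g_{-1} = f(i+1) + 3 f(i-2)$ at $(i+1, 4)$, $g_0 = f(i) + 3 f(i-1)$ at $(i, 4)$, and $g_s = (2s+3) f(i-s)$ at $(i-s, 2s+3)$ for $s \ge 1$.

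Comparing these candidates via the bounds on $f(k)/f(k-1)$: for even $k$, the dominant term is $g_2 = 6 f(i-2)$, since $g_0 < 6 f(i-2)$ follows from $f(i) < (3/2)^2 f(i-2) < 3 f(i-2)$, $g_1 = 4 f(i-1) < 6 f(i-2)$ follows from $f(i-1)/f(i-2) < 3/2$ (which requires $i-1 \ge 7$, hence $k \ge 16$), and $g_s$ for $s \ge 3$ satisfies $g_s \le (2s+2)(5/7)^{s-2} f(i-2) < 6 f(i-2)$ via the lower bound $f(i-1)/f(i-2),\ldots,f(i-s+1)/f(i-s) \ge 7/5$. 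Analogously for odd $k$ the dominant term is $g_1 = 5 f(i-1)$, with the critical comparison $5 f(i-1) > 7 f(i-2) = g_2$ requiring $f(i-1)/f(i-2) > 7/5$ strictly, which first holds at $i \ge 7$ (hence $k \ge 15$); the tie $f(5)/f(4) = 7/5$ at $i=6$ gives exactly $g(13) = 5 f(5) = 7 f(4) = 35$, consistent with the table. The tabulated small cases are obtained from the same enumeration; for instance $g(14) = 4 f(6) = 44$ rather than $6 f(5) = 42$, precisely because the even formula is invalid at $i = 7$.

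To construct $T_k$ for $k \ge 15$, take a central vertex $v$ and attach $r$ copies of the beautiful tree $B_x$ from Lemma~\ref{lem:f} by identifying the root of each copy with $v$; here $(r, x) = (6, (k-4)/2)$ for even $k$ and $(5, (k-3)/2)$ for odd $k$. This tree has $r\,f(x) = g(k)$ edges. A caterpillar in $T_k$ either lies entirely in a single copy of $B_x$, with size at most $2x - 1 < k$ by Lemma~\ref{lem:f}, or its spine passes through $v$: then two of $v$'s edges extend into two branches each as a very hungry caterpillar of size at most $x$, while the remaining $r - 2$ edges at $v$ contribute as leaf edges attached to the spine at $v$, for a total of at most $2x + (r-2) = k$, attained by choosing both arms to reach leaves of their branches. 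For $k \le 14$, $T_k$ is constructed analogously from the enumerated optimum, possibly with $y \ne x$. The main obstacle is the two boundary comparisons $4 f(i-1) < 6 f(i-2)$ (requiring $i \ge 8$) and $5 f(i-1) > 7 f(i-2)$ (requiring $i \ge 7$), which together with the tie case at $i = 6$ determine the thresholds separating the closed-form regime from the tabulated cases and require the bounds of Lemma~\ref{lem:f}(a)(b) to be used with care.
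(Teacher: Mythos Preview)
Your approach is sound and reaches the correct conclusions, though it is organized differently from the paper and has one imprecision worth flagging.

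The paper first shows that the maximum of $h(r,x,y) = f(y) + (r-1)f(x)$ over feasible triples can be taken with $x=y$ (except possibly when $k$ is odd and $y=(k+1)/2$): whenever $x<y$ and $2y\le k$, a shifting argument proves that at least one of $h(r+(y-x),x,x)$ and $h(r-(y-x),y,y)$ is at least $h(r,x,y)$. With the search reduced to the diagonal $x=y$, the paper compares $r\,f\!\bigl(\tfrac{k-r+2}{2}\bigr)$ across even (respectively odd) $r$ using Lemma~\ref{lem:f}(a)(b), dispatches the leftover case $y=(k+1)/2$ separately, and constructs $T_k$ exactly as you do (copies of $B_x$ glued at a common root). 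You bypass the diagonal reduction by instead fixing $y$ and optimizing over $r$ first, then comparing over $y$; both routes ultimately compare the same candidate values, but the paper's reduction keeps the case analysis shorter and makes the odd-$k$ comparisons you defer to ``analogously'' fully explicit.

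The imprecision: your monotonicity argument only establishes that $h$ is nondecreasing at $r=2$ and nonincreasing for $r\ge 4$; the step $r=3\to 4$ is left undetermined, since it has the correct sign only when $f(x)/f(x-1)<3/2$, i.e., when the value of $x$ at $r=3$ is at least $7$. For instance at $k=15$, $y=8$, the optimum over $r$ is at $r=3$ (value $f(8)+2f(6)=45$), not at $r=4$ (your $g_{-1}=44$). This does not harm your conclusion, because that candidate is still dominated by $g_1=5f(6)=55$ and because for $i\ge 8$ all the relevant $x$ are $\ge 7$; but the sentence ``the optimum for each fixed $y$ is either $r=4$ when feasible, or the minimum $r$ with $x\le y$ otherwise'' overstates what the two ratio bounds actually give you. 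The paper's reduction to $x=y$ sidesteps this issue altogether.
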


\begin{proof}
	Write $h(r, x, y) = f(y) + (r - 1)\cdot f(x)$.
	Then
	$$
	g(k) = \max\{\, h(r, x, y) \mid
	2 \le r \le k,\,
	x + y + r - 2 = k,\,
	1 \le x \le y \le \lceil k/2 \rceil
	\,\}.
	$$
	With this formula, we can determine $g(k)$ for small values of $k$ without much difficulty.
	Incidentally, for each $k = 2,\ldots,14$, $g(k)$ can be realized by $h(r, x, y)$
	for some tuple $(r, x, y)$ with $x = y$:
	\begin{align*}
		g(2) &= h(2,1,1) = 2\cdot f(1) = 2,\\
		g(3) &= h(3,1,1) = 3\cdot f(1) = 3,\\
		g(4) &= h(4,1,1) = 4\cdot f(1) = 4,\\
		g(5) &= h(3,2,2) = 3\cdot f(2) = 6,\\
		g(6) &= h(4,2,2) = 4\cdot f(2) = 8,\\
		g(7) &= h(5,2,2) = 5\cdot f(2) = 10,\\
		g(8) &= h(4,3,3) = 4\cdot f(3) = 12,\\
		g(9) &= h(3,4,4) = 3\cdot f(4) = 15,\\
		g(10) &= h(4,4,4) = 4\cdot f(4) = 20,\\
		g(11) &= h(5,4,4) = 5\cdot f(4) = 25,\\
		g(12) &= h(6,4,4) = 6\cdot f(4) = 30,\\
		g(13) &= h(5,5,5) = 5\cdot f(5) = 35,\\
		g(14) &= h(4,6,6) = 4\cdot f(6) = 44.
	\end{align*}

	In the following, we assume that $k \ge 15$.
	Note that
	$5 f(\frac{k - 3}2) = h(5, \frac{k-3}2, \frac{k-3}2)$
	and
	$6 f(\frac{k - 4}2) = h(6, \frac{k-4}2, \frac{k-4}2)$.
	To find the maximum value of $h(r, x, y)$
	for
	$2 \le r \le k$,
	$x + y + r - 2 = k$,
	and
	$1 \le x \le y \le \lceil k/2 \rceil$,
	we can assume that either $x = y$, or $x < y$ and $2 y > k$,
	because if $x < y$ and $2 y \le k$,
	then at least one of the following two inequalities must hold:
	\begin{align}
		h(r, x, y) &\le h(r + (y - x), x, x),\label{eq:plus}\\
		h(r, x, y) &\le h(r - (y - x), y, y).\label{eq:minus}
	\end{align}

	Inequality~\eqref{eq:plus}
	is equivalent to
	\begin{align}\label{eq:ge}
		(r - 1) f(x) + f(y) &\le (r + (y - x)) f(x)\nonumber\\
		f(y) &\le (y - x + 1) f(x)\nonumber\\
		\frac{f(x)}{f(y)} &\ge \frac1{y - x + 1}.
	\end{align}

	Inequality~\eqref{eq:minus}
	is equivalent to
	\begin{align}\label{eq:le}
		(r - 1) f(x) + f(y) &\le (r - (y - x)) f(y)\nonumber\\
		(r - 1) f(x) &\le (r - 1 - (y - x)) f(y)\nonumber\\
		\frac{f(x)}{f(y)} &\le 1 - \frac{y - x}{r - 1}.
	\end{align}

	Since $x + y + r - 2 = k$,
	we have $r - 1 = k - x - y + 1 = k - 2y + y - x + 1$.
	If $2y \le k$, then
	$r - 1 \ge y - x + 1$,
	and hence
	$$
	1 - \frac{y - x}{r - 1}
	\ge
	1 - \frac{y - x}{y - x + 1} = \frac1{y - x + 1}.
	$$
	Thus at least one of \eqref{eq:ge} and \eqref{eq:le} must hold.
	It follows that at least one of \eqref{eq:plus} and \eqref{eq:minus} must hold.

	Therefore,
	to find the maximum value of $h(r, x, y)$
	for
	$2 \le r \le k$,
	$x + y + r - 2 = k$,
	and
	$1 \le x \le y \le \lceil k/2 \rceil$,
	we can assume that either $x = y$, or $x < y$ and $2 y > k$.
	Consider three cases:

	\bigskip
	Case 1:
	$2 \le r \le k$, $x + y + r - 2 = k$, $1 \le x = y \le \lceil k/2 \rceil$,
	$k \ge 16$ is even.
	Then $r$ is also even,
	and $x = y = \frac{k - r + 2}2$.
	We next show that
	$r f(\frac{k - r + 2}2) \le 6 f(\frac{k - 4}2)$
	for all even $r$, $2 \le r \le k$:
\begin{itemize}\setlength\itemsep{0pt}
		\item $r = 2$.
			Since $k \ge 16$, we have $\frac{k}2 \ge 8$,
			and it follows by Lemma~\ref{lem:f}(b) that
			$\frac{f(\frac{k}2)}{f(\frac{k - 4}2)} < (\frac32)^2 < \frac62$.
			Thus $2 f(\frac{k}2) < 6 f(\frac{k - 4}2)$.

		\item $r = 4$.
			Since $k \ge 16$, we have $\frac{k - 2}2 \ge 7$,
			and it follows by Lemma~\ref{lem:f}(b) that
			$\frac{f(\frac{k - 2}2)}{f(\frac{k - 4}2)} < \frac32 = \frac64$.
			Thus $4 f(\frac{k - 2}2) < 6 f(\frac{k - 4}2)$.

		\item $6 \le r \le k$.
			We prove by induction that $r f(\frac{k - r + 2}2) \le 6 f(\frac{k - 4}2)$.
			For the base case when $r = 6$,
			we have $r f(\frac{k - r + 2}2) = 6 f(\frac{k - 4}2)$.
			For the inductive step, fix $8 \le r \le k$.
			By Lemma~\ref{lem:f}(a), we have
			$\frac{f(\frac{k - r + 4}2)}{f(\frac{k - r + 2}2)} \ge \frac75 > \frac{r}{r - 2}$,
			and hence
			$r f(\frac{k - r + 2}2) < (r - 2) f(\frac{k - r + 4}2)$.
			By induction,
			$(r - 2) f(\frac{k - r + 4}2) \le 6 f(\frac{k - 4}2)$.
			Thus
			$r f(\frac{k - r + 2}2) < 6 f(\frac{k - 4}2)$.
\end{itemize}

\bigskip
Case 2:
$2 \le r \le k$, $x + y + r - 2 = k$, $1 \le x = y \le \lceil k/2 \rceil$,
$k \ge 15$ is odd.
Then $r$ is also odd,
and $x = y = \frac{k - r + 2}2$.
We next show that
$r f(\frac{k - r + 2}2) \le 5 f(\frac{k - 3}2)$
for all odd $r$, $3 \le r \le k$:
	\begin{itemize}\setlength\itemsep{0pt}
			\item $r = 3$.
				Since $k \ge 15$, we have $\frac{k - 1}2 \ge 7$,
				and it follows by Lemma~\ref{lem:f}(b) that
				$\frac{f(\frac{k - 1}2)}{f(\frac{k - 3}2)} < \frac32 < \frac53$.
				Thus $3 f(\frac{k - 1}2) < 5 f(\frac{k - 3}2)$.

			\item $5 \le r \le k$.
				We prove by induction that $r f(\frac{k - r + 2}2) \le 5 f(\frac{k - 3}2)$.
				For the base case when $r = 5$,
				we have $r f(\frac{k - r + 2}2) = 5 f(\frac{k - 3}2)$.
				For the inductive step, fix $7 \le r \le k$.
				By Lemma~\ref{lem:f}(a), we have
				$\frac{f(\frac{k - r + 4}2)}{f(\frac{k - r + 2}2)} \ge \frac75 \ge \frac{r}{r - 2}$,
				and hence
				$r f(\frac{k - r + 2}2) \le (r - 2) f(\frac{k - r + 4}2)$.
				By induction,
				$(r - 2) f(\frac{k - r + 4}2) \le 5 f(\frac{k - 3}2)$.
				Thus $r f(\frac{k - r + 2}2) \le 5 f(\frac{k - 3}2)$.
	\end{itemize}

	\bigskip
	Case 3:
	$2 \le r \le k$, $x + y + r - 2 = k$, $1 \le x < y \le \lceil k/2 \rceil$, $2y > k \ge 15$.
	Since $y \le \lceil k/2 \rceil$ and $2y > k$,
	we must have $y = \frac{k + 1}2$ for an odd $k$.
	Then $x = k + 1 - y - r + 1 = y - r + 1$.
	With fixed $y = \frac{k + 1}2$, we have
	\begin{align*}
		&\max\{\, f(y) + (r - 1)\cdot f(x) \mid
		2 \le r \le k,\, x + y + r - 2 = k,\, 1 \le x \le y \le \lceil k/2 \rceil \,\}\\
		&= 
		f(y) + \max\{\, (r - 1)\cdot f(y - r + 1) \mid 2 \le r \le k, 1 \le y - r + 1 < y \,\}\\
		&= 
		f(y) + \max\{\, (r - 1)\cdot f(y - (r - 1)) \mid 2 \le r \le y \,\}\\
		&= 
		f(y) + \max\{\, c\cdot f(y - c) + 1 \mid 1 \le c < y \,\} - 1\\
		&= 
		f(y) + f(y) - 1,
	\end{align*}
	where the last step follows from the recurrence~\eqref{eq:f}
	in the proof of Lemma~\ref{lem:f}.
	Since $k \ge 15$, we have $\frac{k + 1}2 \ge 8$,
	and it follows by Lemma~\ref{lem:f}(b)
	that
	$\frac{f(\frac{k + 1}2)}{f(\frac{k - 3}2)} < (\frac32)^2 < \frac52$.
	Thus, 
	$2 f(\frac{k + 1}2) - 1 < 2 f(\frac{k + 1}2) < 5 {f(\frac{k - 3}2)}$.

	We have proved that
	$g(k) = 5 f(\frac{k - 3}2)$
	for $k = 2 i + 1$ with $i \ge 7$,
	and
	$g(k) = 6 f(\frac{k - 4}2)$
	for $k = 2 i$ with $i \ge 8$.
	In particular, for $15 \le k \le 26$, we have
	\begin{align*}
		g(15) &= 5\cdot f(6) = 55,   &g(16) &= 6\cdot f(6) = 66,\\
		g(17) &= 5\cdot f(7) = 80,   &g(18) &= 6\cdot f(7) = 96,\\
		g(19) &= 5\cdot f(8) = 115,  &g(20) &= 6\cdot f(8) = 138,\\
		g(21) &= 5\cdot f(9) = 170,  &g(22) &= 6\cdot f(9) = 204,\\
		g(23) &= 5\cdot f(10) = 245, &g(24) &= 6\cdot f(10) = 294,\\
		g(25) &= 5\cdot f(11) = 350, &g(26) &= 6\cdot f(11) = 420.
	\end{align*}

	Let $T_1$ be the tree consisting of a single edge.
	For $k \ge 2$, we have $g(k) = r f(x)$
	for some $r$ and $x$ where $2 \le r \le k$ and $x = \frac{k - r + 2}2$.
	Let $T_k$ be the tree consisting of $r$ copies of $B_x$ sharing the root.
	Then the size of $T_k$ is exactly $g(k)$.
	There exists a caterpillar in $T_k$ with exactly $k$ edges,
	including $2x$ edges of two very hungry caterpillars in two copies of $B_x$,
	and $r - 2$ other edges incident to the root that are not in these two very hungry
	caterpillars.
	Any caterpillar in $T_k$ must include, either at most one, or at least two,
	of the $r$ edges incident to the root.
	In the first case, the caterpillar is contained completely in one copy of $B_x$,
	and hence can have at most $2x - 1 = k - r + 1 < k$ edges by Lemma~\ref{lem:f}.
	In the second case, the caterpillar can extend into at most two copies of $B_x$ composing $T_k$,
	and hence can have at most $2x + r - 2 = k$ edges.

	Refer to Figures~\ref{fig:g234567}, \ref{fig:g8910}, \ref{fig:g1112},
	\ref{fig:g1314},
	\ref{fig:g1516},
	\ref{fig:g1718},
	\ref{fig:g1920},
	\ref{fig:g2122},
	\ref{fig:g2324},
	\ref{fig:g2526}
	for illustrations of $T_k$ for $2 \le k \le 26$.
\end{proof}

\begin{figure}[htbp]
	\centering\includegraphics{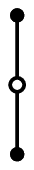}\includegraphics{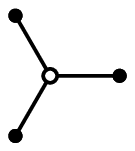}\includegraphics{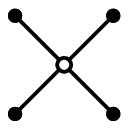}\includegraphics{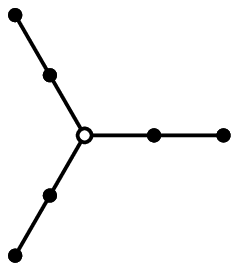}\includegraphics{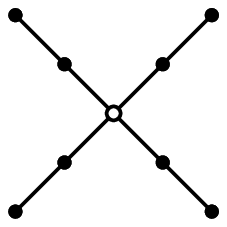}\includegraphics{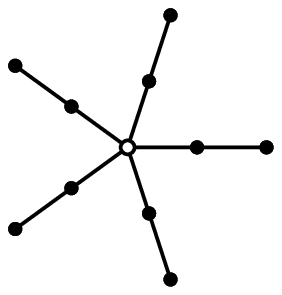}
	\caption{$T_2$ with $2$ edges, $T_3$ with $3$ edges, $T_4$ with $4$ edges,
	$T_5$ with $6$ edges, $T_6$ with $8$ edges, and $T_7$ with $10$ edges.}\label{fig:g234567}
\end{figure}

\begin{figure}[htbp]
	\centering\includegraphics{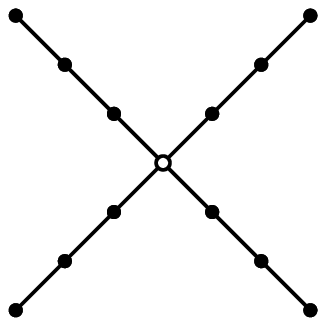}\quad\includegraphics{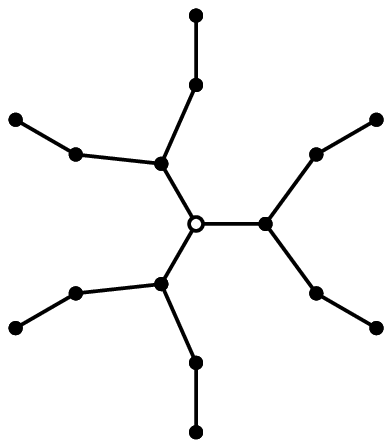}\quad\includegraphics{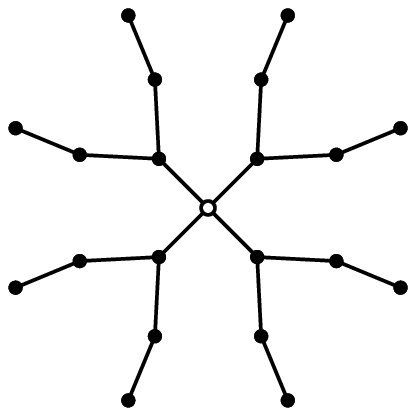}
	\caption{$T_8$ with $12$ edges, $T_9$ with $15$ edges, and $T_{10}$ with $20$ edges.}\label{fig:g8910}
\end{figure}

\begin{figure}[htbp]
	\centering\includegraphics{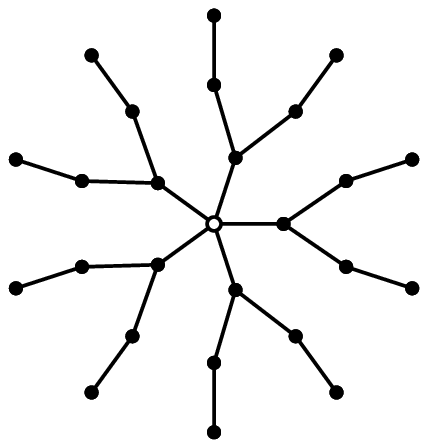}\qquad\includegraphics{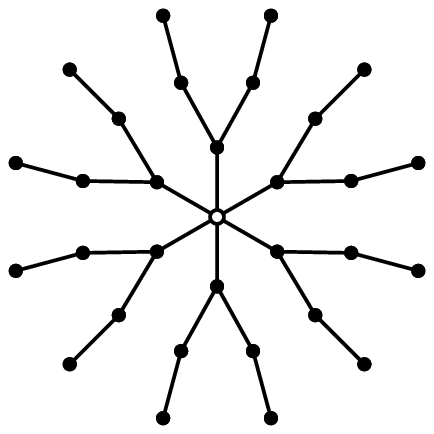}
	\caption{$T_{11}$ with $25$ edges and $T_{12}$ with $30$ edges.}\label{fig:g1112}
\end{figure}

\begin{figure}[htbp]
	\centering\includegraphics{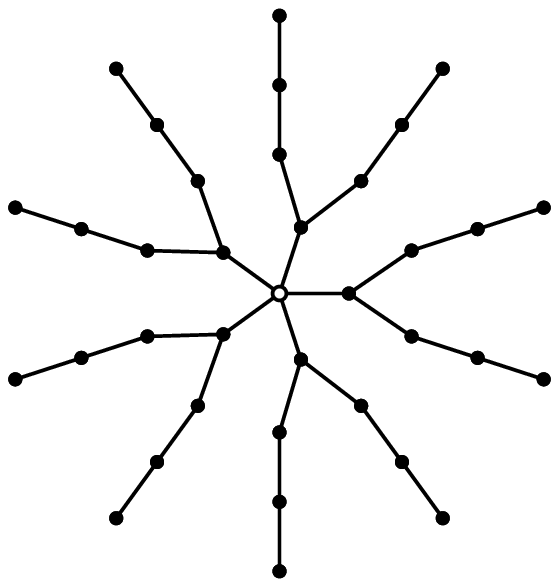}\qquad\includegraphics{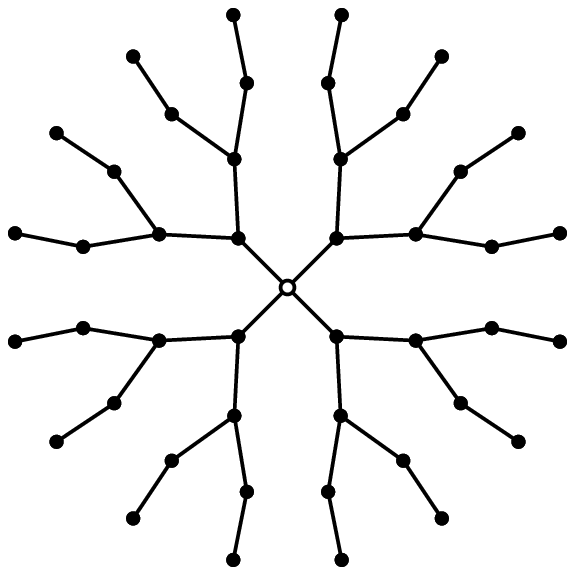}
	\caption{$T_{13}$ with $35$ edges and $T_{14}$ with $44$ edges.}\label{fig:g1314}
\end{figure}

\begin{figure}[htbp]
	\centering\includegraphics{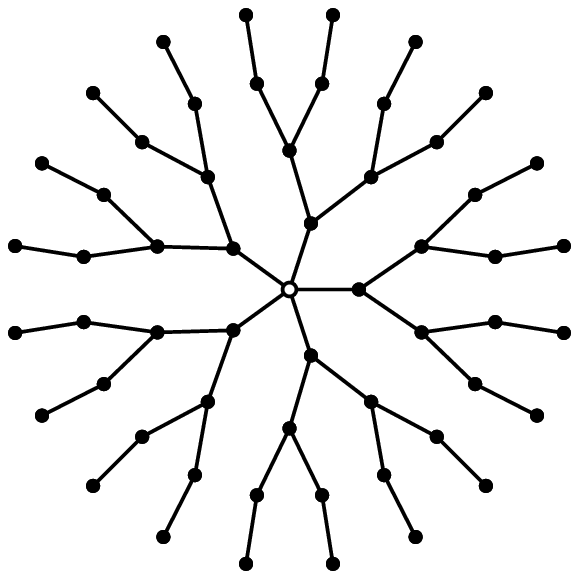}\qquad\includegraphics{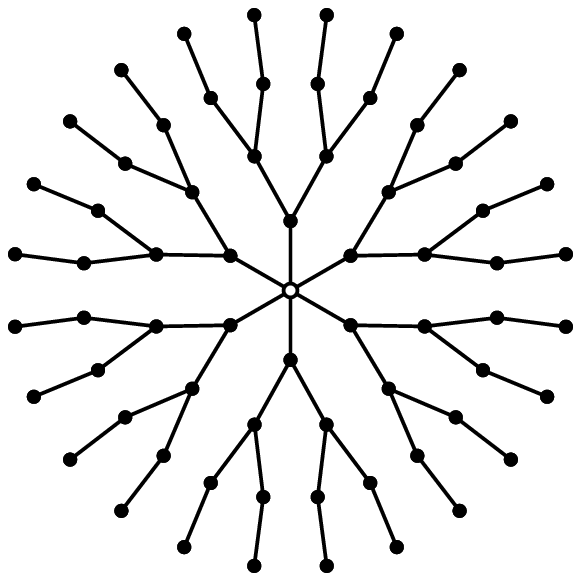}
	\caption{$T_{15}$ with $55$ edges and $T_{16}$ with $66$ edges.}\label{fig:g1516}
\end{figure}

\begin{figure}[htbp]
	\centering\includegraphics{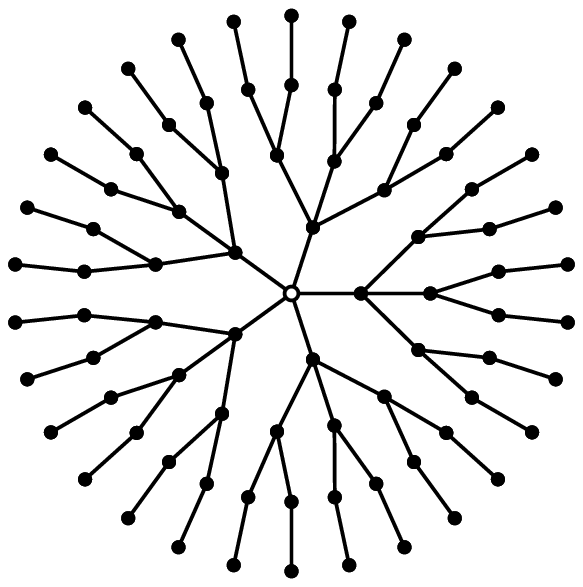}\qquad\includegraphics{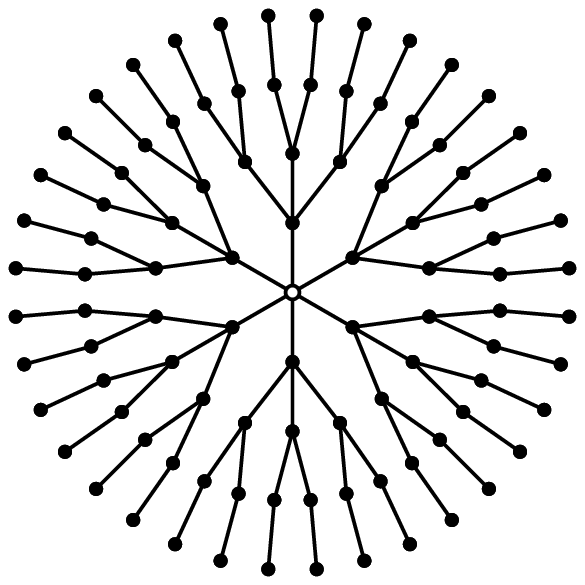}
	\caption{$T_{17}$ with $80$ edges and $T_{18}$ with $96$ edges.}\label{fig:g1718}
\end{figure}

\begin{figure}[htbp]
	\centering\includegraphics{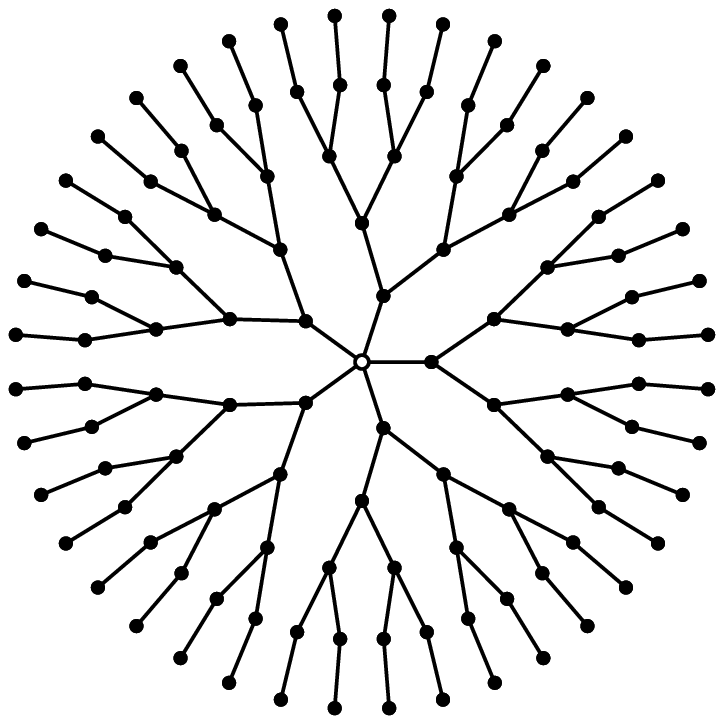}\qquad\includegraphics{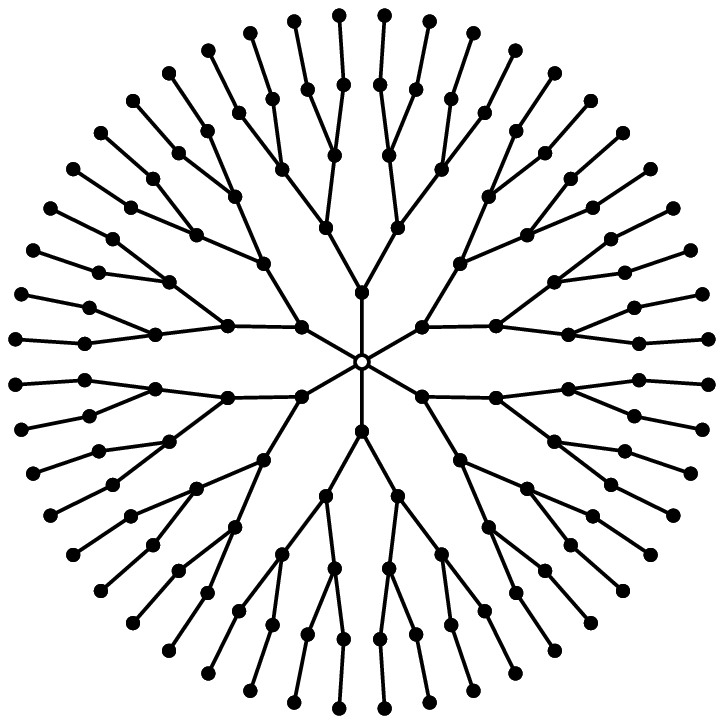}
	\caption{$T_{19}$ with $115$ edges and $T_{20}$ with $138$ edges.}\label{fig:g1920}
\end{figure}

\begin{figure}[htbp]
	\centering\includegraphics{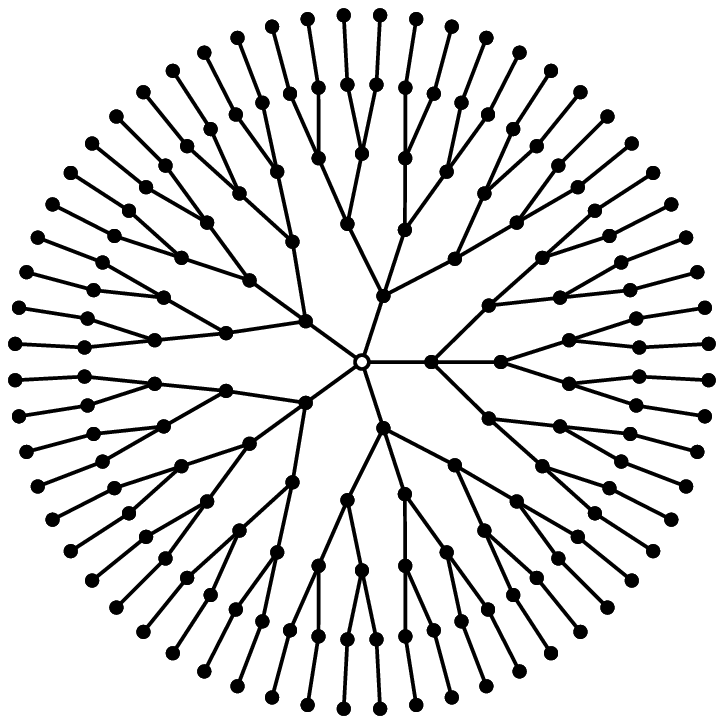}\qquad\includegraphics{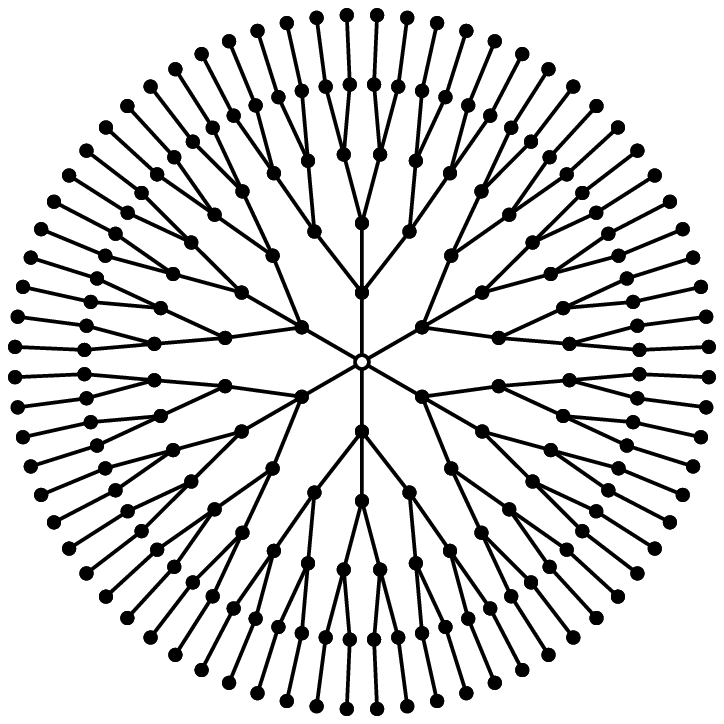}
	\caption{$T_{21}$ with $170$ edges and $T_{22}$ with $204$ edges.}\label{fig:g2122}
\end{figure}

\begin{figure}[htbp]
	\centering\includegraphics{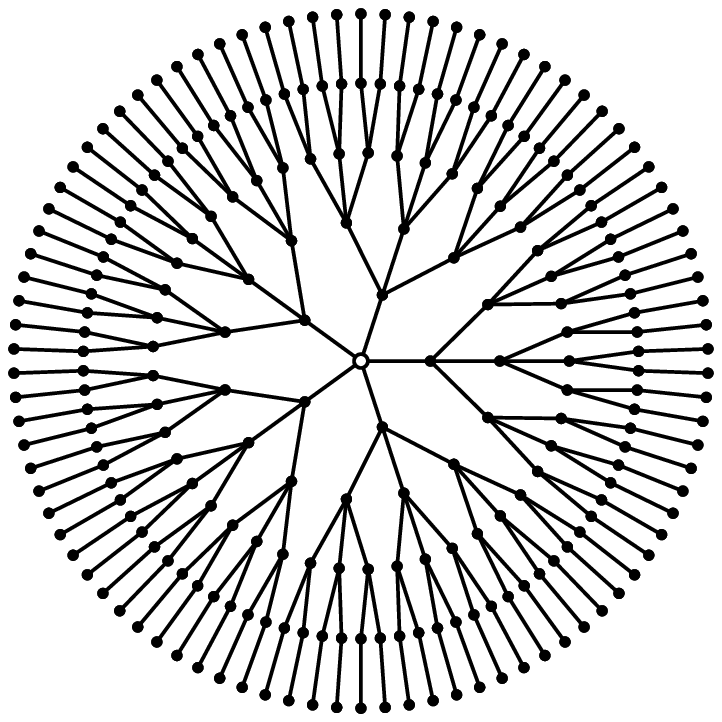}\qquad\includegraphics{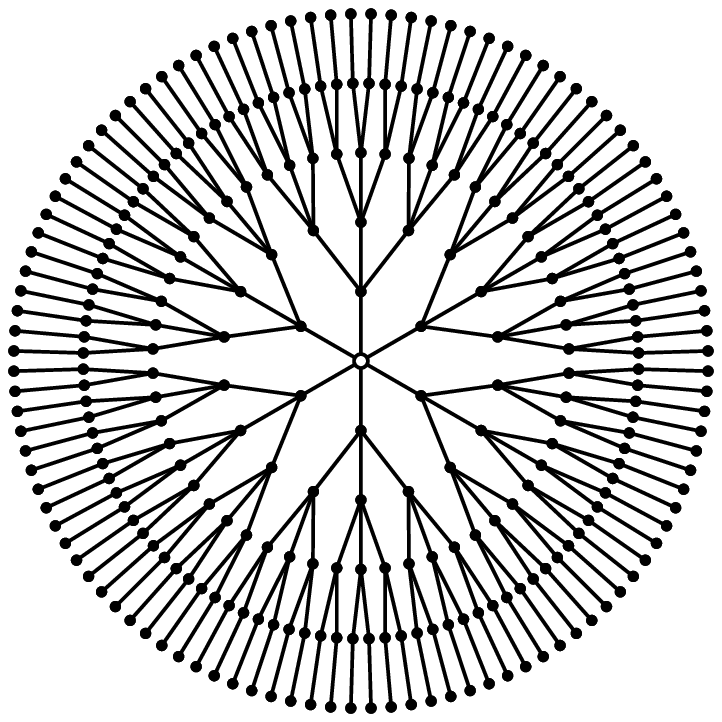}
	\caption{$T_{23}$ with $245$ edges and $T_{24}$ with $294$ edges.}\label{fig:g2324}
\end{figure}

\begin{figure}[htbp]
	\centering\resizebox{\linewidth}{!}{\includegraphics{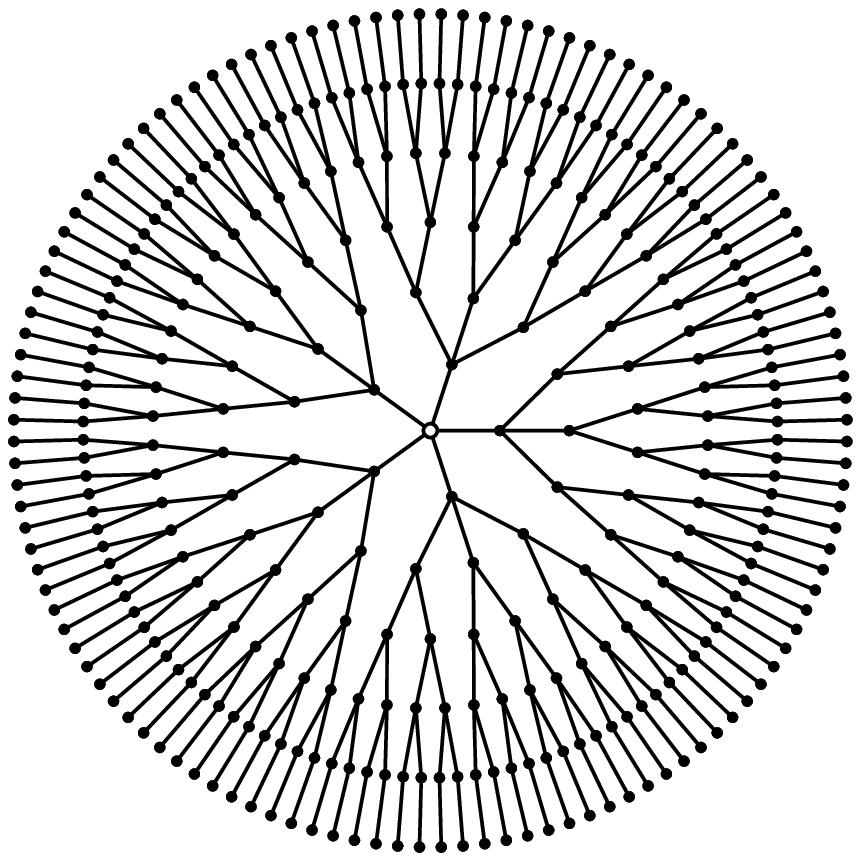}\includegraphics{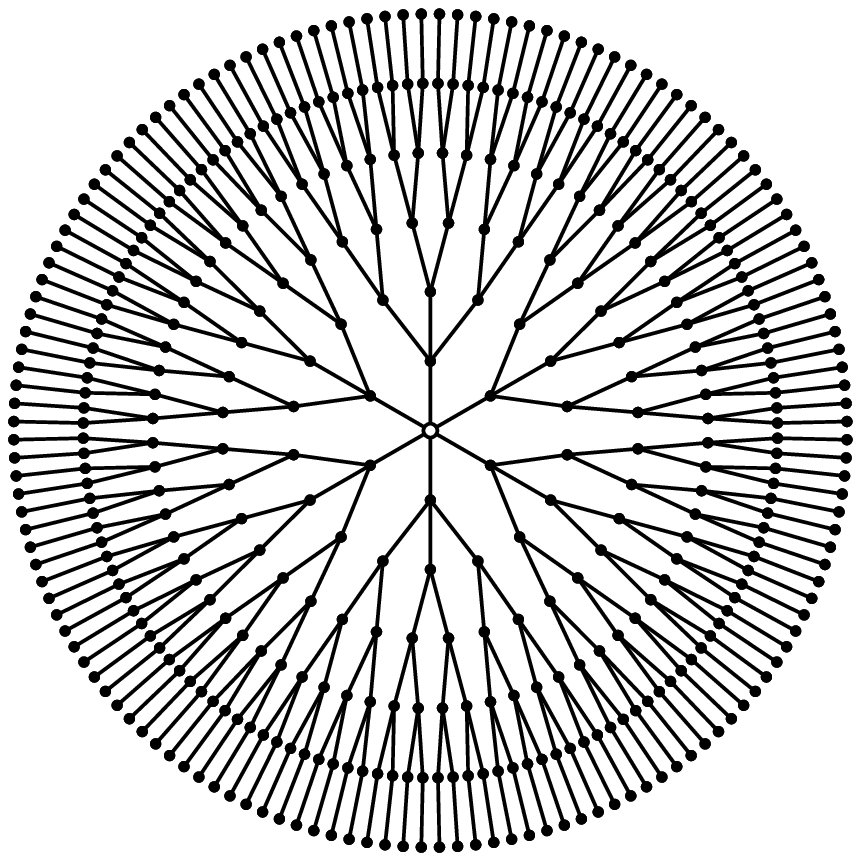}}
	\caption{$T_{25}$ with $350$ edges and $T_{26}$ with $420$ edges.}\label{fig:g2526}
\end{figure}

\newpage

\subsection{Deriving $e(k)$ from $f(k)$ and $g(k)$}

We clearly have $e(1) = 1$.
By Lemma~\ref{lem:g},
we have $e(k) = g(k)$ for $k \ge 2$.
Recall Lemma~\ref{lem:g} that
\begin{itemize}\setlength\itemsep{0pt}
		\item
			For $k = 2 i$ for $i \ge 8$,
			$g(k) = 6 f(\frac{k-4}2)$.
		\item
			For $k = 2 i + 1$ for $i \ge 7$,
			$g(k) = 5 f(\frac{k-3}2)$.
\end{itemize}

Also recall Lemma~\ref{lem:f} that
\begin{itemize}\setlength\itemsep{0pt}
		\item
			For $k = 3i$ with $i \ge 2$,
			$f(k) = \frac12(23\cdot 3^{(k-6)/3} - 1)$.
		\item
			For $k = 3i + 1$ with $i \ge 2$,
			$f(k) = \frac12(33\cdot 3^{(k-7)/3} - 1)$.
		\item
			For $k = 3i + 2$ with $i \ge 2$,
			$f(k) = \frac12(47\cdot 3^{(k-8)/3} - 1)$.
\end{itemize}

Altogether, there are six cases for $k \ge 15$:
\begin{itemize}\setlength\itemsep{0pt}
		\item
			For $k = 6i$ with $i \ge 3$,
			$e(k) = 6 f(\frac{k-4}2)
			= 3(33\cdot 3^{(\frac{k-4}2 - 7)/3} - 1)
			= 3(33\cdot 3^{(k-18)/6} - 1)$.
		\item
			For $k = 6i + 2$ with $i \ge 3$,
			$e(k) = 6 f(\frac{k-4}2)
			= 3(47\cdot 3^{(\frac{k-4}2 - 8)/3} - 1)
			= 3(47\cdot 3^{(k-20)/6} - 1)$.
		\item
			For $k = 6i + 4$ with $i \ge 2$,
			$e(k) = 6 f(\frac{k-4}2)
			= 3(23\cdot 3^{(\frac{k-4}2 - 6)/3} - 1)
			= 3(23\cdot 3^{(k-16)/6} - 1)$.
		\item
			For $k = 6i + 1$ with $i \ge 3$,
			$e(k) = 5 f(\frac{k-3}2)
			= \frac52(47\cdot 3^{(\frac{k-3}2 - 8)/3} - 1)
			= \frac52(47\cdot 3^{(k-19)/6} - 1)$.
		\item
			For $k = 6i + 3$ with $i \ge 2$,
			$e(k) = 5 f(\frac{k-3}2)
			= \frac52(23\cdot 3^{(\frac{k-3}2 - 6)/3} - 1)
			= \frac52(23\cdot 3^{(k-15)/6} - 1)$.
		\item
			For $k = 6i + 5$ with $i \ge 2$,
			$e(k) = 5 f(\frac{k-3}2)
			= \frac52(33\cdot 3^{(\frac{k-3}2 - 7)/3} - 1)
			= \frac52(33\cdot 3^{(k-17)/6} - 1)$.
\end{itemize}

In particular, for $k = 15,\ldots,21$, we have
\begin{align*}
	e(15) &= \frac52(23\cdot 3^{(15-15)/6} - 1) = 55,\\
	e(16) &= 3(23\cdot 3^{(16-16)/6} - 1) = 66,\\
	e(17) &= \frac52(33\cdot 3^{(17-17)/6} - 1) = 80,\\
	e(18) &= 3(33\cdot 3^{(18-18)/6} - 1) = 96,\\
	e(19) &= \frac52(47\cdot 3^{(19-19)/6} - 1) = 115,\\
	e(20) &= 3(47\cdot 3^{(20-20)/6} - 1) = 138,\\
	e(21) &= \frac52(23\cdot 3^{(21-15)/6} - 1) = 170.
\end{align*}

In summary, we have the following theorem:

\begin{theorem}\label{thm:e}
	$e(1) = 1$,
	$e(2) = 2$,
	$e(3) = 3$,
	$e(4) = 4$,
	$e(5) = 6$,
	$e(6) = 8$,
	$e(7) = 10$,
	$e(8) = 12$,
	$e(9) = 15$,
	$e(10) = 20$,
	$e(11) = 25$,
	$e(12) = 30$,
	$e(13) = 35$,
	$e(14) = 44$,
	$e(15) = 55$,
	$e(16) = 66$,
	$e(17) = 80$,
	$e(18) = 96$,
	$e(19) = 115$,
	$e(20) = 138$,
	$e(21) = 170$.
\begin{itemize}\setlength\itemsep{0pt}
		\item
			For $k = 6i$ with $i \ge 3$,
			$e(k) = 3(11\cdot 3^{(k-12)/6} - 1)$.
		\item
			For $k = 6i + 2$ with $i \ge 3$,
			$e(k) = 3(47\cdot 3^{(k-20)/6} - 1)$.
		\item
			For $k = 6i + 4$ with $i \ge 2$,
			$e(k) = 3(23\cdot 3^{(k-16)/6} - 1)$.
		\item
			For $k = 6i + 1$ with $i \ge 3$,
			$e(k) = \frac52(47\cdot 3^{(k-19)/6} - 1)$.
		\item
			For $k = 6i + 3$ with $i \ge 2$,
			$e(k) = \frac52(23\cdot 3^{(k-15)/6} - 1)$.
		\item
			For $k = 6i + 5$ with $i \ge 2$,
			$e(k) = \frac52(11\cdot 3^{(k-11)/6} - 1)$.
\end{itemize}
\end{theorem}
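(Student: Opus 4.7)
The proof I envision is essentially a mechanical combination of Lemma~\ref{lem:g} and Lemma~\ref{lem:f}, with care taken over the arithmetic and the range of validity for each of six residues modulo $6$. First, $e(1)=1$ is immediate, since the unique tree whose maximum caterpillar has size $1$ is the single edge. The values $e(2),\ldots,e(14)$ are inherited directly from Lemma~\ref{lem:g}, which already established $e(k)=g(k)$ for $k\ge 2$ and tabulated $g(k)$ in this range. This disposes of the first block of explicit values in the statement.

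For $k\ge 15$, the plan is to split into six cases according to $k\bmod 6$ and apply Lemma~\ref{lem:g} followed by the appropriate branch of Lemma~\ref{lem:f}. For even $k\ge 16$, Lemma~\ref{lem:g} gives $e(k)=6f(\tfrac{k-4}{2})$; for odd $k\ge 15$, it gives $e(k)=5f(\tfrac{k-3}{2})$. In each residue class modulo $6$, the argument of $f$ is congruent to a fixed value modulo $3$, which determines which of the three subformulas of Lemma~\ref{lem:f} to invoke. For instance, when $k=6i$ with $i\ge 3$, we have $\tfrac{k-4}{2}=3i-2\equiv 1\pmod 3$ and $3i-2\ge 7$, so Lemma~\ref{lem:f} gives $f(3i-2)=\tfrac12(33\cdot 3^{i-3}-1)$, yielding $e(k)=3(33\cdot 3^{i-3}-1)=3(11\cdot 3^{(k-12)/6}-1)$ after absorbing one factor of $3$ into the exponent. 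The other five residues are handled identically, producing the six displayed formulas. The boundary values $e(15),\ldots,e(21)$, which appear both in the explicit table and within the range of the six formulas, are verified by direct substitution; e.g.\ $e(15)=\tfrac52(23\cdot 3^0-1)=55$ and $e(21)=\tfrac52(23\cdot 3^1-1)=170$.

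The main obstacle is purely combinatorial bookkeeping rather than any new idea: one must check that the index thresholds chosen for each residue ($i\ge 2$ or $i\ge 3$) are simultaneously compatible with the hypothesis $i\ge 7$ or $i\ge 8$ required by Lemma~\ref{lem:g} on $g(k)$ and with the requirement that the argument of $f$ be at least $6$, so that the three-case closed form of Lemma~\ref{lem:f} applies rather than one of the small-$k$ exceptional values $f(1),\ldots,f(5)$. A case-by-case inspection shows that the six ranges listed in the theorem cover every $k\ge 15$ and lie strictly within the common validity range of both lemmas, so no additional special cases beyond the explicit table $e(1),\ldots,e(21)$ are needed. With this verification in place, the six displayed formulas together with the tabulated small values give $e(k)$ for all $k\ge 1$.
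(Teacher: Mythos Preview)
Your proposal is correct and follows essentially the same approach as the paper: combine Lemma~\ref{lem:g} (giving $e(k)=g(k)$ and the parity split into $6f(\tfrac{k-4}{2})$ or $5f(\tfrac{k-3}{2})$) with the three-case closed form of Lemma~\ref{lem:f}, yielding the six residue classes modulo $6$, and then verify the overlapping boundary values $e(15),\ldots,e(21)$ by substitution. The paper does exactly this, with slightly less explicit discussion of the threshold compatibility that you flag as the main bookkeeping issue.
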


\subsection{Deriving $q(m)$ from $e(k)$}

By Theorem~\ref{thm:e},
the exact values of $e(k)$ for $1 \le k \le 21$
imply the following exact values of $q(m)$ for $1 \le m \le 170$:
\begin{center}
	\begin{tabular}{c|c c c c c c c c c}
		\hline
		$m$ & $[1,4]$ & $[5,6]$ & $[7,8]$ & $[9,10]$ & $[11,12]$ & $[13,15]$ & $[16,20]$ & $[21,25]$ & $[26,30]$\\
		$q(m)$ & $m$ & $5$ & $6$ & $7$ & $8$ & $9$ & $10$ & $11$ & $12$\\
		\hline
		$m$ & $[31,35]$ & $[36,44]$ & $[45,55]$ & $[56,66]$ & $[67,80]$ & $[81,96]$ & $[97,115]$ & $[116,138]$ & $[139,170]$\\
		$q(m)$ & $13$ & $14$ & $15$ & $16$ & $17$ & $18$ & $19$ & $20$ & $21$\\
		\hline
	\end{tabular}
\end{center}

In the following, we assume that $m \ge 171$.
If a tree $T$ has $m > e(k - 1)$ edges,
then it contains a caterpillar of $k$ edges.
Thus $q(m)$ is the largest integer $k$ such that $e(k - 1) < m$.

For $k - 1 = 6i$ with $i \ge 3$,
$e(k - 1) = 3(11\cdot 3^{(k-13)/6} - 1)$.
The inequality $e(k - 1) < m$ is equivalent to
\begin{gather*}
	3(11\cdot 3^{(k-13)/6} - 1) < m\\
	11\cdot 3^{(k-13)/6} < \frac{m}3 + 1\\
	3^{(k-13)/6} < \frac{m}{33} + \frac1{11}\\
	(k-13)/6 < \log_3\left( \frac{m}{33} + \frac1{11} \right)\\
	k < 6\log_3\left( \frac{m}{33} + \frac1{11} \right) + 13\\
	k < \left\lceil 6\log_3\left( \frac{m}{33} + \frac1{11} \right) + 13 \right\rceil\\
	k \le \left\lceil 6\log_3\left( \frac{m}{33} + \frac1{11} \right) + 12 \right\rceil\\
	k - 1 \le \left\lceil 6\log_3\left( \frac{m}{33} + \frac1{11} \right) + 11 \right\rceil.
\end{gather*}
Thus the largest integer $k$ such that $e(k - 1) < m$ and $k \bmod 6 = 1$ is
\begin{equation}\label{eq:q1}
	q_1(m) = 6\left\lfloor\frac16\left\lceil
	6\log_3\left( \frac{m}{33} + \frac1{11} \right) + 11
	\right\rceil\right\rfloor + 1.
\end{equation}

\newpage

For $k - 1 = 6i + 2$ with $i \ge 3$,
$e(k - 1) = 3(47\cdot 3^{(k-21)/6} - 1)$.
The inequality $e(k - 1) < m$ is equivalent to
\begin{gather*}
	3(47\cdot 3^{(k-21)/6} - 1) < m\\
	47\cdot 3^{(k-21)/6} < \frac{m}3 + 1\\
	3^{(k-21)/6} < \frac{m}{141} + \frac1{47}\\
	(k-21)/6 < \log_3\left( \frac{m}{141} + \frac1{47} \right)\\
	k < 6\log_3\left( \frac{m}{141} + \frac1{47} \right) + 21\\
	k < \left\lceil 6\log_3\left( \frac{m}{141} + \frac1{47} \right) + 21 \right\rceil\\
	k \le \left\lceil 6\log_3\left( \frac{m}{141} + \frac1{47} \right) + 20 \right\rceil\\
	k - 3 \le \left\lceil 6\log_3\left( \frac{m}{141} + \frac1{47} \right) + 17 \right\rceil.
\end{gather*}
Thus the largest integer $k$ such that $e(k - 1) < m$ and $k \bmod 6 = 3$ is
\begin{equation}\label{eq:q3}
	q_3(m) = 6\left\lfloor\frac16\left\lceil
	6\log_3\left( \frac{m}{141} + \frac1{47} \right) + 17
	\right\rceil\right\rfloor + 3.
\end{equation}

For $k - 1 = 6i + 4$ with $i \ge 2$,
$e(k - 1) = 3(23\cdot 3^{(k-17)/6} - 1)$.
The inequality $e(k - 1) < m$ is equivalent to
\begin{gather*}
	3(23\cdot 3^{(k-17)/6} - 1) < m\\
	23\cdot 3^{(k-17)/6} < \frac{m}3 + 1\\
	3^{(k-17)/6} < \frac{m}{69} + \frac1{23}\\
	(k-17)/6 < \log_3\left( \frac{m}{69} + \frac1{23} \right)\\
	k < 6\log_3\left( \frac{m}{69} + \frac1{23} \right) + 17\\
	k < \left\lceil 6\log_3\left( \frac{m}{69} + \frac1{23} \right) + 17 \right\rceil\\
	k \le \left\lceil 6\log_3\left( \frac{m}{69} + \frac1{23} \right) + 16 \right\rceil\\
	k - 5 \le \left\lceil 6\log_3\left( \frac{m}{69} + \frac1{23} \right) + 11 \right\rceil.
\end{gather*}
Thus the largest integer $k$ such that $e(k - 1) < m$ and $k \bmod 6 = 5$ is
\begin{equation}\label{eq:q5}
	q_5(m) = 6\left\lfloor\frac16\left\lceil
	6\log_3\left( \frac{m}{69} + \frac1{23} \right) + 11
	\right\rceil\right\rfloor + 5.
\end{equation}

For $k - 1 = 6i + 1$ with $i \ge 3$,
$e(k - 1) = \frac52(47\cdot 3^{(k-20)/6} - 1)$.
The inequality $e(k - 1) < m$ is equivalent to
\begin{gather*}
	\frac52(47\cdot 3^{(k-20)/6} - 1) < m\\
	47\cdot 3^{(k-20)/6} < \frac{2m}5 + 1\\
	3^{(k-20)/6} < \frac{2m}{235} + \frac1{47}\\
	(k-20)/6 < \log_3\left( \frac{2m}{235} + \frac1{47} \right)\\
	k < 6\log_3\left( \frac{2m}{235} + \frac1{47} \right) + 20\\
	k < \left\lceil 6\log_3\left( \frac{2m}{235} + \frac1{47} \right) + 20 \right\rceil\\
	k \le \left\lceil 6\log_3\left( \frac{2m}{235} + \frac1{47} \right) + 19 \right\rceil\\
	k - 2 \le \left\lceil 6\log_3\left( \frac{2m}{235} + \frac1{47} \right) + 17 \right\rceil.
\end{gather*}
Thus the largest integer $k$ such that $e(k - 1) < m$ and $k \bmod 6 = 2$ is
\begin{equation}\label{eq:q2}
	q_2(m) = 6\left\lfloor\frac16\left\lceil
	6\log_3\left( \frac{2m}{235} + \frac1{47} \right) + 17
	\right\rceil\right\rfloor + 2.
\end{equation}

For $k - 1 = 6i + 3$ with $i \ge 2$,
$e(k - 1) = \frac52(23\cdot 3^{(k-16)/6} - 1)$.
The inequality $e(k - 1) < m$ is equivalent to
\begin{gather*}
	\frac52(23\cdot 3^{(k-16)/6} - 1) < m\\
	23\cdot 3^{(k-16)/6} < \frac{2m}5 + 1\\
	3^{(k-16)/6} < \frac{2m}{115} + \frac1{23}\\
	(k-16)/6 < \log_3\left( \frac{2m}{115} + \frac1{23} \right)\\
	k < 6\log_3\left( \frac{2m}{115} + \frac1{23} \right) + 16\\
	k < \left\lceil 6\log_3\left( \frac{2m}{115} + \frac1{23} \right) + 16 \right\rceil\\
	k \le \left\lceil 6\log_3\left( \frac{2m}{115} + \frac1{23} \right) + 15 \right\rceil\\
	k - 4 \le \left\lceil 6\log_3\left( \frac{2m}{115} + \frac1{23} \right) + 11 \right\rceil.
\end{gather*}
Thus the largest integer $k$ such that $e(k - 1) < m$ and $k \bmod 6 = 4$ is
\begin{equation}\label{eq:q4}
	q_4(m) = 6\left\lfloor\frac16\left\lceil
	6\log_3\left( \frac{2m}{115} + \frac1{23} \right) + 11
	\right\rceil\right\rfloor + 4.
\end{equation}

For $k - 1 = 6i + 5$ with $i \ge 2$,
$e(k - 1) = \frac52(11\cdot 3^{(k-12)/6} - 1)$.
The inequality $e(k - 1) < m$ is equivalent to
\begin{gather*}
	\frac52(11\cdot 3^{(k-12)/6} - 1) < m\\
	11\cdot 3^{(k-12)/6} < \frac{2m}5 + 1\\
	3^{(k-12)/6} < \frac{2m}{55} + \frac1{11}\\
	(k-12)/6 < \log_3\left( \frac{2m}{55} + \frac1{11} \right)\\
	k < 6\log_3\left( \frac{2m}{55} + \frac1{11} \right) + 12\\
	k < \left\lceil 6\log_3\left( \frac{2m}{55} + \frac1{11} \right) + 12 \right\rceil\\
	k \le \left\lceil 6\log_3\left( \frac{2m}{55} + \frac1{11} \right) + 11 \right\rceil.
\end{gather*}
Thus the largest integer $k$ such that $e(k - 1) < m$ and $k \bmod 6 = 0$ is
\begin{equation}\label{eq:q0}
	q_0(m) = 6\left\lfloor\frac16\left\lceil
	6\log_3\left( \frac{2m}{55} + \frac1{11} \right) + 11
	\right\rceil\right\rfloor.
\end{equation}

Thus for $m \ge 171$,
$q(m) = \max\{\, q_r(m) \mid 0 \le r \le 5 \,\}$.
This completes the proof of Theorem~\ref{thm:conv-log}.

\section{An open question}

Recall our definitions of
$\hat p(n)$, $\hat q(n)$, $p(n)$, and $q(n)$ for $n \ge 1$:
\begin{itemize}\setlength\itemsep{0pt}
		\item
			$\hat p(n)$ (respectively, $\hat q(n)$)
			is the maximum number $k$
			such that any family $\S$ of $n$ disjoint segments in the plane
			admits an alternating path among $\S$ (respectively, compatible with $\S$)
			going through $k$ segments in $\S$,
		\item
			$p(n)$ (respectively, $q(n)$)
			is the maximum number $k$
			such that any family $\S$ of $n$ disjoint segments in the plane,
			whose $2n$ endpoints are in convex position,
			admits an alternating path among $\S$ (respectively, compatible with $\S$)
			going through $k$ segments in $\S$.
\end{itemize}
We clearly have $\hat p(n) \le p(n)$ and $\hat q(n) \le q(n)$ for all $n \ge 1$,
but can we have
$\hat p(n) < p(n)$ or $\hat q(n) < q(n)$
for some $n \ge 1$?
In other words,
could it be true that
$\hat p(n) = p(n)$ and $\hat q(n) = q(n)$ for all $n \ge 1$?

\end{document}